
\documentclass[11pt,reqno]{amsart}

\addtolength{\voffset}{-10mm}\addtolength{\textheight}{20mm}
\addtolength{\hoffset}{-15mm}\addtolength{\textwidth}{30mm}

\usepackage{amssymb}
\usepackage{latexcad}

\author{P\v{r}emysl Jedli\v{c}ka}
\address[Jedli\v{c}ka]{Department of Mathematics, Faculty of Engineering, Czech University of Life Sciences,
Kam\'yck\'a 129, 165 21 Prague 6–-Suchdol, Czech Republic}
\email[Jedli\v{c}ka]{jedlickap@tf.czu.cz}
\thanks{P\v{r}emysl Jedli\v{c}ka supported by the Grant Agency of the Czech Republic, grant no.~201/07/P015.}

\author{Michael K.~Kinyon}
\address[Kinyon]{Department of Mathematics, University of Denver, 2360 S Gaylord St, Denver, Colorado, 80208, U.S.A.}
\email[Kinyon]{mkinyon@math.du.edu}

\author{Petr Vojt\v{e}chovsk\'y}
\address[Vojt\v{e}chovsk\'y]{Department of Mathematics, University of Denver, 2360 S Gaylord St, Denver, Colorado, 80208, U.S.A.}
\email[Vojt\v{e}chovsk\'y]{petr@math.du.edu}

\title{Constructions of commutative automorphic loops}

\newtheorem{theorem}{Theorem}[section]
\theoremstyle{plain}

\newtheorem{conjecture}[theorem]{Conjecture}

\newtheorem{corollary}[theorem]{Corollary}

\newtheorem{example}[theorem]{Example}

\newtheorem{lemma}[theorem]{Lemma}

\newtheorem{problem}[theorem]{Problem}
\newtheorem{proposition}[theorem]{Proposition}
\newtheorem{remark}[theorem]{Remark}

\numberwithin{equation}{section}

\def\lnuc#1{N_\lambda(#1)}
\def\mnuc#1{N_\mu(#1)}
\def\rnuc#1{N_\rho(#1)}

\def\gf#1{\mathrm{GF}(#1)}
\def\mlt#1{\mathrm{Mlt}(#1)}
\def\inn#1{\mathrm{Inn}(#1)}
\def\ld{\,\backslash\,}

\def\ov#1{\overline{#1}}
\def\aut#1{\mathrm{Aut}(#1)}

\def\im{\mathop{\mathrm{Im}}}

\def\ter#1{\mathcal Q(#1)} 
\def\terg#1#2#3{\mathcal Q_{#2,#3}(#1)}

\begin{document}

\begin{abstract}
A loop whose inner mappings are automorphisms is an \emph{automorphic loop} (or
\emph{A-loop}). We characterize commutative (A-)loops with middle nucleus of
index $2$ and solve the isomorphism problem. Using this characterization and
certain central extensions based on trilinear forms, we construct several
classes of commutative A-loops of order a power of $2$. We initiate the
classification of commutative A-loops of small orders and also of order $p^3$,
where $p$ is a prime.
\end{abstract}

\subjclass[2000]{20N05}

\keywords{commutative automorphic loop, commutative A-loop, automorphic inner
mappings, central extensions, enumeration of A-loops}

\maketitle

\section{Introduction}

A \emph{loop} is a groupoid $(Q,\cdot)$ with neutral element $1$ such that all
left translations $L_x:Q\to Q$, $y\mapsto xy$ and all right translations
$R_x:Q\to Q$, $y\mapsto yx$ are bijections of $Q$. Given a loop $Q$ and $x$,
$y\in Q$, we denote by $x\ld y$ the unique element of $Q$ satisfying $x(x\ld y)
= y$. In other words, $x\ld y = L_x^{-1}(y)$.

To reduce the number of parentheses, we adopt the following convention for term
evaluation: $\ld$ is less binding than juxtaposition, and $\cdot$ is less
binding than $\ld$. For instance $xy\ld u\cdot v\ld w$ is parsed as $((xy)\ld
u)(v\ld w)$.

The \emph{inner mapping group} $\inn{Q}$ of a loop $Q$ is the permutation group
generated by
\begin{displaymath}
    L_{x,y} = L_{yx}^{-1}L_yL_x,\quad R_{x,y} = R_{xy}^{-1}R_yR_x,\quad T_x = L_x^{-1}R_x,
\end{displaymath}
where $x$, $y\in Q$. A subloop of $Q$ is \emph{normal} if it is invariant under
all inner mappings of $Q$.

A loop $Q$ is an \emph{automorphic loop} (or \emph{A-loop}) if
$\inn{Q}\le\aut{Q}$, that is, if every inner mapping of $Q$ is an automorphism
of $Q$. Hence a commutative loop is an A-loop if and only if all its left inner
mappings $L_{y,x}$ are automorphisms, which can be expressed by the identity
\begin{equation}\label{Eq:A}
    xy\ld x(yu)\cdot xy\ld x(yv) = xy\ld x(y\cdot uv).\tag{\textsc{A}}
\end{equation}
Note that the class of commutative A-loops contains commutative groups and
commutative Moufang loops.

We assume that the reader is familiar with the terminology and notation of loop
theory, cf. \cite{Bruck} or \cite{Pflugfelder}. This paper is a companion to
\cite{JKV}, where we have presented a historical introduction and many new
structural results concerning commutative $A$-loops, including:
\begin{enumerate}
\item[$\bullet$] commutative A-loops are power-associative (see already
    \cite{BP}),
\item[$\bullet$] for a prime $p$, a finite commutative A-loop $Q$ has order
    a power of $p$ if and only if every element of $Q$ has order a power of
    $p$,
\item[$\bullet$] every finite commutative A-loop is a direct product of a
    loop of odd order (consisting of elements of odd order) and a loop of
    order a power of $2$,
\item[$\bullet$] commutative A-loops of odd order are solvable,
\item[$\bullet$] the Lagrange and Cauchy theorems hold for commutative
    A-loops,
\item[$\bullet$] every finite commutative A-loop has Hall $\pi$-subloops
    (and hence Sylow $p$-subloops),
\item[$\bullet$] if there is a nonassociative finite simple commutative
    A-loop, it is of exponent $2$.
\end{enumerate}
Despite these deep results, the theory of commutative A-loops is in its
infancy. As an illustration of this fact, the present theory is not
sufficiently developed to classify commutative A-loops of order $8$ without the
aid of a computer, commutative A-loops of order $pq$ (where $p<q$ are primes),
nor commutative A-loops of order $p^3$ (where $p$ is an odd prime).

The two main problems for commutative A-loops stated in \cite{JKV} were:
\emph{For an odd prime $p$, is every commutative A-loop of order $p^k$
centrally nilpotent?} \emph{Is there a nonassociative finite simple commutative
A-loop, necessarily of exponent $2$ and order a power of $2$?} For an example
of a commutative A-loop of order $8$ that is not centrally nilpotent, see
Subsection \ref{Ss:8}.

In the meantime, we have managed to solve the first problem of \cite{JKV} in
the affirmative, but we neither use nor prove the result here---it will appear
elsewhere. The second problem remains open and the many constructions of
commutative A-loops of exponent $2$ obtained here can be seen as a step toward
solving it.

One of the most important concepts in the investigation of commutative A-loops
appears to be the middle nucleus $N_\mu(Q)$, since, by \cite{BP},
$N_\lambda(Q)\le N_\mu(Q)$, $N_\rho(Q)\le N_\mu(Q)$ and $N_\mu(Q)\unlhd Q$ is
true in any A-loop $Q$. In \S\ref{Sc:Index2} we characterize all commutative
loops with middle nucleus of index $2$, solve the isomorphism problem, and then
characterize all commutative A-loops with middle nucleus of index $2$. In
\S\ref{Sc:AppsIndex2} we classify commutative A-loops of order $8$, among other
applications of \S\ref{Sc:Index2}.

Central extensions of commutative A-loops are described in
\S\ref{Sc:Extensions}. A broad class of such extensions is obtained from
trilinear forms that are symmetric with respect to an interchange of (fixed)
two arguments. As an application, we characterize all parameters $(k,\ell)$
with the property that there is a nonassociative commutative A-loop of order
$2^k$ with middle nucleus of order $2^\ell>1$.

\S\ref{Sc:p3} uses another class of central extensions partially based on the
overflow in modular arithmetic that yields many (conjecturally, all)
nonassociative commutative A-loops of order $p^3$, where $p$ is an odd prime.

A classification of commutative A-loops of small orders based on the theory and
computer computations can be found in \S\ref{Sc:Enumeration}.

\section{Commutative loops with middle nucleus of index $2$}\label{Sc:Index2}

Throughout this section, we denote by $\ov{X} = \{\ov{x};\;x \in X\}$ a
disjoint copy of the set $X$.

Let $G$ be a commutative group and $f$ a bijection of $G$. Then $G(f)$ will
denote the groupoid $(G\cup \ov{G},*)$ with multiplication
\begin{equation}\label{Eq:Gf}
    x*y = xy,\quad x*\ov{y} = \ov{xy},\quad \ov{x}*y=\ov{xy},\quad
    \ov{x}*\ov{y}=f(xy),
\end{equation}
for $x$, $y\in G$. Note that $G(f)$ is a loop with neutral element $1$.

\begin{lemma}\label{Lm:PropertiesGf}
Let $G$ be a commutative group, $f$ a bijection of $G$ and $(Q,\cdot) = G(f) =
(G\cup \ov{G},*)$. Then:
\begin{enumerate}
\item[(i)] $Q$ is commutative.
\item[(ii)] $x\ld y=x^{-1}y$, $x\ld\ov{y}=\ov{x^{-1}y}$, $\ov{x}\ld y =
\ov{x^{-1}f^{-1}(y)}$, $\ov{x}\ld\ov{y} = x^{-1}y$ for every $x$, $y\in G$.
\item[(iii)] $G\le\mnuc{Q}$.
\item[(iv)] $Q$ is a group if and only if $f$ is a translation of the group
$G$.
\item[(v)] $N_\lambda(Q)\cap G = N_\rho(Q)\cap G = Z(Q)\cap G = \{x\in
    G;\;f(xy)=xf(y)\text{ for every }y\in G\}$. When $Q$ is not a group
    $($that is, $G=\mnuc{Q})$, then $N_\lambda(Q)=N_\rho(Q)=Z(Q)\le G$.
\end{enumerate}
\end{lemma}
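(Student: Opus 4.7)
The plan is to work through the five items essentially by reading off the multiplication table (\ref{Eq:Gf}) and checking associativity in each combination of $G$ and $\ov{G}$ elements. All five parts are computational; the only ``idea'' needed is to track which associativity triples are automatic from the group structure on $G$ and which ones impose a genuine condition on $f$.

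For (i), I would just verify $u*v=v*u$ in each of the four cases of (\ref{Eq:Gf}) using commutativity of $G$. For (ii), in each case I solve $u*z=v$ for $z$ by noting which side of $Q = G\cup\ov{G}$ the unknown $z$ must lie in (determined by whether the product sits in $G$ or $\ov{G}$) and then inverting the corresponding formula in (\ref{Eq:Gf}); for instance $\ov{x}*z=y\in G$ forces $z=\ov{w}$ and then $f(xw)=y$, yielding $z=\ov{x^{-1}f^{-1}(y)}$. For (iii), I check $(x*u)*y=x*(u*y)$ for $x,y\in G$ and $u\in Q$, splitting on $u\in G$ (trivial group associativity) versus $u\in\ov{G}$ (both sides compute to $\overline{xwy}$ when $u=\ov{w}$); neither case uses $f$.

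For (iv), I would enumerate the remaining associativity cases, namely triples with two or three elements from $\ov{G}$. A direct computation shows that every such case reduces to one of the two identities $f(xy)\cdot z = f(xyz)$ and $f(xyz)=x\cdot f(yz)$ for all $x,y,z\in G$. Setting $y=z=1$ and $x=y=1$ respectively, these force $f(x)=f(1)x$, i.e.\ $f$ is a translation. Conversely, if $f(x)=ax$ with $a\in G$, both identities follow from associativity and commutativity of $G$. This is the step requiring the most bookkeeping, so it is the main obstacle, but it is purely mechanical.

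For (v), I first note that in any commutative loop, writing $a(xy)=(ax)y$, swapping $x,y$ and commuting gives $(xy)a=x(ya)$, so $N_\lambda(Q)=N_\rho(Q)$; together with (iii) and the identity $Z(Q)=N_\lambda(Q)\cap N_\mu(Q)\cap N_\rho(Q)$ (for commutative $Q$) this yields $N_\lambda(Q)\cap G = Z(Q)\cap G$. To identify this intersection explicitly, I check $x(u*v)=(x*u)*v$ for $x\in G$ and $u,v\in Q$: all subcases are automatic except $u=\ov{y}$, $v=\ov{z}$, which produces exactly $f(xyz)=xf(yz)$, i.e.\ $f(xw)=xf(w)$ for every $w\in G$. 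Finally, for the ``furthermore'' clause, I suppose $\ov{a}\in N_\lambda(Q)$ and apply left associativity to the triple $\ov{a},\ov{u},z$ with $u,z\in G$; this yields $f(auz)=f(au)z$ for all $u,z$, hence $f$ is a translation, so by (iv) $Q$ is a group, contradicting the hypothesis $G=\mnuc{Q}$. Therefore $N_\lambda(Q)\subseteq G$, and combined with the previous paragraph this gives $N_\lambda(Q)=N_\rho(Q)=Z(Q)\le G$.
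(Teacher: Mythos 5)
Your proposal is correct and follows essentially the same route as the paper: a direct case-by-case verification from the multiplication table \eqref{Eq:Gf}, including the same reduction $f(xy)=xf(y)$ for the nuclear elements in $G$ and the same contradiction argument (a barred element in $N_\lambda(Q)$ forces $f$ to be a translation) for the final clause of (v). The only cosmetic difference is in (iv), where the paper tests just whether $\ov{1}\in N_\mu(Q)$ (using that $N_\mu(Q)$ is a subloop containing $G$) instead of enumerating all associativity triples; both verifications land on the same identity $f(x)=xf(1)$.
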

\begin{proof}
Part (i) follows from the definition of $G(f)$. Part (ii) is straightforward,
for instance, $x*\ov{x^{-1}y} = \ov{xx^{-1}y}=\ov{y}$ shows that
$x\ld\ov{y}=\ov{x^{-1}y}$.

For (iii), let $x$, $y$, $z\in G$ and verify that
\begin{align*}
    &x*(y*z) = (x*y)*z,\\
    &\ov{x}*(y*z)=\ov{x}*yz = \ov{xyz} = \ov{xy}*z = (\ov{x}*y)*z,\\
    &x*(y*\ov{z}) = x*\ov{yz} = \ov{xyz} = xy*\ov{z} = (x*y)*\ov{z},\\
    &\ov{x}*(y*\ov{z}) = \ov{x}*\ov{yz} = f(xyz) = \ov{xy}*\ov{z} = (\ov{x}*y)*\ov{z}.
\end{align*}
This shows $G\le \mnuc{Q}$.

(iv) An easy calculation shows that $\ov{1}\in \mnuc{Q}$ (that is, $Q$ is a
group) if and only if $f(xy)=xf(y)=f(x)y$ for every $x$, $y\in G$. With $y=1$
we deduce that $f(x)=xf(1)$ for every $x$. On the other hand, if $f(x)=xf(1)$
for every $x$ then $f(xy)=xf(y)=f(x)y$.

(v) We have $x*(y*z) = (x*y)*z$, $x*(\ov{y}*z) = \ov{xyz} = (x*\ov{y})*z$,
$x*(y*\ov{z}) = \ov{xyz} = (x*y)*\ov{z}$, and $x*(\ov{y}*\ov{z}) = xf(yz)$,
while $(x*\ov{y})*\ov{z}= f(xyz)$. Hence $x\in\lnuc{Q}$ if and only if
$xf(yz)=f(xyz)$ for every $y$, $z\in G$, which holds if and only if $xf(y) =
f(xy)$ for every $y\in G$. By commutativity, $\lnuc{Q}=\rnuc{Q}$. By (iii),
$\lnuc{Q}\cap G = Z(Q)\cap G$.

Assume that $Q$ is not a group. Suppose that $\ov{x}\in\lnuc{Q}$. Then $f(xyz)
= \ov{x} *\ov{yz} = \ov{x}*(\ov{y} *z) = (\ov{x}*\ov{y})*z = f(xy)*z= f(xy)z$
for every $y$, $z\in G$, and hence (with $y=x^{-1}$), $f(z)=f(1)z$ for every
$z\in G$. By (iv), $Q$ is a group, a contradiction. Thus $\lnuc{Q}\le G$.
\end{proof}

\begin{lemma} Let $Q$ be a commutative loop with subloop $G$ satisfying $G\le
\mnuc{Q}$, $[Q:G]=2$. Then $G$ is a commutative group and there exists a
bijection $f$ of $G$ such that $Q$ is isomorphic to $G(f)$.
\end{lemma}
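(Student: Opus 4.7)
The plan is to first show that $G$ itself must be a commutative group, then pick any element $e\in Q\setminus G$ and use it to label each element of the nontrivial coset uniquely as $\ov{x}:=xe$ with $x\in G$, and finally read off the multiplication table of $Q$ in these coordinates, which will naturally produce the bijection $f$.

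To see that $G$ is a group, I note that $G$ is commutative as a subloop of the commutative loop $Q$, and that $G\le\mnuc{Q}$ gives, for any $a,b,c\in G$, the middle associativity $(ab)c=a(bc)$; this is full associativity, so $G$ is a commutative group. For the parametrization, fix $e\in Q\setminus G$, which exists as $[Q:G]=2$. Since $G$ is closed under left division, $xe\in G$ for some $x\in G$ would force $e=x\ld(xe)\in G$, a contradiction. Hence $R_e$ maps $G$ injectively, and by the cardinality forced by $[Q:G]=2$ bijectively, into $Q\setminus G$. This lets me set $\ov{x}:=xe$ and write $Q=G\cup\ov{G}$ as a disjoint union. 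The same right-cancellation argument applied to $R_e$ on $Q\setminus G$ shows that $R_e$ restricts to a bijection $\ov{G}\to G$; in particular $(xe)e\in G$ for every $x\in G$.

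With these coordinates I would verify the four formulas of \eqref{Eq:Gf}: the product $x\cdot y=xy$ is immediate, $x\cdot\ov{y}=x(ye)=(xy)e=\ov{xy}$ uses $y\in\mnuc{Q}$, and $\ov{x}\cdot y=\ov{xy}$ then follows by commutativity. The main step is $\ov{x}\cdot\ov{y}=(xe)(ye)$: I apply middle associativity with middle entry $y$ to obtain $(xe)(ye)=((xe)y)e$, then combine commutativity with middle associativity with middle entry $x$ to rewrite $(xe)y=y(xe)=(yx)e=(xy)e$, and conclude $(xe)(ye)=((xy)e)e\in G$. Defining $f(g):=(ge)e$ thus gives a map $f\colon G\to G$, which is a bijection as the composition of the two bijections $R_e|_G\colon G\to\ov{G}$ and $R_e|_{\ov{G}}\colon\ov{G}\to G$. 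The identity on $G$ together with the assignment $\ov{x}\mapsto xe$ is then the required isomorphism $G(f)\to Q$. The only point requiring genuine care is this last case; the content of the lemma is essentially that $R_e^2$ restricts to a bijection of $G$ governing the whole nontrivial part of the multiplication, everything else being routine middle-nuclear bookkeeping.
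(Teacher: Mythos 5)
Your proof is correct and follows essentially the same route as the paper's: fix a representative $e$ of the nontrivial coset, use $G\le\mnuc{Q}$ to shuffle $e$ past elements of $G$, and obtain the same bijection $f(x)=(xe)e=e\cdot xe$. The only cosmetic difference is that you derive the closed formula $(xe)(ye)=((xy)e)e$ directly, whereas the paper first checks that $\ov{x_1}\,\ov{y_1}$ depends only on the product $x_1y_1$ and then identifies $f$; these are the same middle-nucleus computation rearranged.
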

\begin{proof}
The commutative loop $G$ is a group by $G\le\mnuc{Q}$. Denote by $\ov{1}$ a
fixed element of $Q\setminus G$, and define $\ov{x}=\ov{1}x=x\ov{1}$ for every
$x\in G$. Note that $\ov{1}$ is well-defined, $G\cap \ov{G}=\emptyset$ and
$Q=G\cup\ov{G}$. Moreover, $x\ov{y} = x\cdot y\ov{1} = xy\cdot \ov{1} =
\ov{xy}$ and $\ov{x}y = \ov{1}x\cdot y = \ov{1}\cdot xy = \ov{xy}$ for every
$x$, $y\in G$, using $G\le\mnuc{Q}$ again. Finally, if $x_1$, $y_1$, $x_2$,
$y_2\in G$ satisfy $x_1y_1=x_2y_2$ then
\begin{displaymath}
    \ov{x_1}\ov{y_1} = \ov{1}x_1\cdot y_1\ov{1} = \ov{1}(x_1\cdot y_1\ov{1})
    = \ov{1}(x_1y_1\cdot\ov{1}) = \ov{1}(x_2y_2\cdot \ov{1}) =
    \ov{x_2}\ov{y_2}.
\end{displaymath}
Thus the multiplication in the quadrant $\ov{G}\times\ov{G}$ mimics that of
$G\times G$, except that the elements are renamed according to the permutation
$f:G\to G$, $x\mapsto \ov{1}\cdot x\ov{1}$.
\end{proof}

\begin{corollary}\label{Cr:Gf}
Let $Q$ be a commutative loop possessing a subgroup of index $2$. Then
$[Q:\mnuc{Q}]\le 2$ if and only if there exists a commutative group $G$ and a
bijection $f$ of $G$ such that $Q$ is isomorphic to $G(f)=(G\cup\ov{G},*)$
defined by \eqref{Eq:Gf}.
\end{corollary}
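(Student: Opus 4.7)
The plan is to reduce the corollary to Lemma \ref{Lm:PropertiesGf} and the unnamed lemma immediately preceding it, both of which already do the structural work; only some bookkeeping remains.

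For the ``if'' direction, assume $Q\cong G(f)$. Lemma \ref{Lm:PropertiesGf}(iii) gives $G\le\mnuc{Q}$, so $\mnuc{Q}$ sits between $G$ and $Q=G\cup\ov{G}$. I would argue that $\mnuc{Q}\in\{G,Q\}$: if $\mnuc{Q}$ properly contains $G$, it contains some $\ov{a}$, and as a subloop containing $G$ it then contains $\{g*\ov{a}:g\in G\}=\{\ov{ga}:g\in G\}=\ov{G}$, hence all of $Q$. In either case $[Q:\mnuc{Q}]\le 2$.

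For the ``only if'' direction I split on whether $\mnuc{Q}=Q$ or $[Q:\mnuc{Q}]=2$. In the latter case take $G:=\mnuc{Q}$; the inclusion $G\le\mnuc{Q}$ is trivial and $[Q:G]=2$, so the preceding lemma delivers $Q\cong G(f)$ at once (with $G$ automatically a commutative group, as part of that lemma's conclusion). In the former case $Q$ is itself a commutative group, and I invoke the corollary's standing hypothesis to pick a subgroup $H$ of $Q$ with $[Q:H]=2$; applying the preceding lemma with this $H\le Q=\mnuc{Q}$ again yields $Q\cong H(f)$ (the bijection $f$ being a translation of $H$ by Lemma \ref{Lm:PropertiesGf}(iv), though this sharpening is not needed for the statement).

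No serious obstacle is expected. The only subtle bookkeeping is that the standing hypothesis ``$Q$ possesses a subgroup of index $2$'' is genuinely needed only in the ``only if'' direction when $Q$ is already a group; when $[Q:\mnuc{Q}]=2$, the middle nucleus itself plays that role, and in the ``if'' direction the group $G$ from the presentation $G(f)$ does.
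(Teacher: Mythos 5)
Your proposal is correct and follows exactly the route the paper intends: the ``if'' direction is Lemma \ref{Lm:PropertiesGf}(iii) plus the observation that a subloop containing the index-$2$ subgroup $G$ is $G$ or $Q$, and the ``only if'' direction applies the preceding lemma with $G=\mnuc{Q}$ (or with the hypothesized index-$2$ subgroup when $Q$ is a group), which matches the remark the paper makes immediately after the corollary.
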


\begin{remark}
The assumption that $Q$ possesses a subgroup of index $2$ in Corollary
$\ref{Cr:Gf}$ is needed only when $Q$ is a group.
\end{remark}

We now solve the isomorphism problem for nonassociative commutative loops with
middle nucleus of index $2$ in terms of the associated bijections:

\begin{proposition}\label{Pr:IsoGf}
Let $G$ be a commutative group and $f_1$, $f_2$ bijections of $G$ such that
$G(f_1)$, $G(f_2)$ are not groups. Then $G(f_1)\cong G(f_2)$ if and only if
there is $\psi\in\aut{G}$ such that
\begin{equation}\label{Eq:IsoGf}
    f_2^{-1}\psi f_1(x) = f_2^{-1}\psi f_1(1)\cdot \psi(x)\quad
    \text{for all $x\in G$},
\end{equation}
and $f_2^{-1}\psi f_1(1)$ is a square in $G$.
\end{proposition}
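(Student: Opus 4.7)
The plan is to exploit the fact that, under the hypothesis that neither $G(f_1)$ nor $G(f_2)$ is a group, the subgroup $G$ is precisely the middle nucleus of each loop (this is exactly Lemma~\ref{Lm:PropertiesGf}(iv)--(v): not being a group forces $\mnuc{G(f_i)}=G$). Any isomorphism $\varphi\colon G(f_1)\to G(f_2)$ therefore restricts to an isomorphism $G\to G$, i.e.\ an automorphism $\psi\in\aut{G}$, and must send $\ov{G}$ onto $\ov{G}$ via some bijection of the form $\ov{x}\mapsto \overline{\alpha(x)}$ for a bijection $\alpha$ of $G$.

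For the forward direction, I would then read off $\alpha$ and $\psi$ from the three mixed products. Applying $\varphi$ to $x*\ov{y}=\ov{xy}$ and using \eqref{Eq:Gf} in $G(f_2)$ gives $\alpha(xy)=\psi(x)\alpha(y)$; setting $y=1$ yields $\alpha(x)=\psi(x)c$ where $c:=\alpha(1)$. The last product $\ov{x}*\ov{y}=f_1(xy)$ then transforms into
\begin{equation*}
\psi f_1(xy) \;=\; f_2\bigl(\alpha(x)\alpha(y)\bigr) \;=\; f_2\bigl(\psi(xy)\,c^2\bigr),
\end{equation*}
valid for all $x,y\in G$. Substituting $z=xy$ gives $f_2^{-1}\psi f_1(z)=\psi(z)\cdot c^2$ for every $z\in G$; evaluation at $z=1$ identifies $c^2 = f_2^{-1}\psi f_1(1)$, which is therefore a square, and substituting back produces exactly \eqref{Eq:IsoGf}.

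For the converse, given $\psi\in\aut{G}$ satisfying \eqref{Eq:IsoGf} and a square root $c$ of $f_2^{-1}\psi f_1(1)$, I would define $\varphi(x)=\psi(x)$ and $\varphi(\ov{x})=\overline{\psi(x)c}$. This is obviously a bijection $G(f_1)\to G(f_2)$, and the four cases of \eqref{Eq:Gf} are verified by running the computation above in reverse: the $G\times G$ case is just $\psi$ being a homomorphism, the two mixed cases reduce to the identity $\psi(x)\cdot\psi(y)c = \psi(xy)c$, and the $\ov{G}\times\ov{G}$ case is precisely the rewritten form of \eqref{Eq:IsoGf} after multiplying through by $c^2$ and applying $f_2$.

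I do not anticipate any real obstacle: the structure of the proof is forced once one observes that $G$ is an invariant of the loop. The only point that deserves explicit mention is the role of the ``not a group'' hypothesis, which is exactly what guarantees $\mnuc{G(f_i)}=G$ and hence the preservation of $G$ by every isomorphism; without it an isomorphism could in principle shuffle $G$ with $\ov{G}$ and the description of isomorphisms via pairs $(\psi,c)$ would need to be enlarged.
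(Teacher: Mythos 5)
Your proposal is correct and follows essentially the same route as the paper: both use that the non-group hypothesis forces $\mnuc{G(f_i)}=G$, so an isomorphism restricts to $\psi\in\aut{G}$ and acts on $\ov{G}$ as $\ov{x}\mapsto\ov{\psi(x)c}$, from which \eqref{Eq:IsoGf} and the square condition on $c^2=f_2^{-1}\psi f_1(1)$ drop out of the $\ov{G}\times\ov{G}$ product, with the converse obtained by reversing the computation.
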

\begin{proof}
Denote by $*$ the multipication in $G(f_1)$, and by $\circ$ the multiplication
in $G(f_2)$.

Assume that $\varphi:G(f_1)\to G(f_2)$ is an isomorphism. Since $G(f_1)$,
$G(f_2)$ are not groups, $\varphi$ maps $\mnuc{G(f_1)}=G$ onto
$\mnuc{G(f_2)}=G$ by Lemma \ref{Lm:PropertiesGf}(iii), and hence $\psi =
\varphi|_G$ is a bijection of $G$. Then
\begin{displaymath}
    \psi(xy) = \varphi(xy) = \varphi(x*y) = \varphi(x)\circ\varphi(y) =
    \psi(x)\circ\psi(y) = \psi(x)\psi(y)
\end{displaymath}
for every $x$, $y\in G$, so $\psi\in\aut{G}$.

Define $\rho:G\to G$ by $\ov{\rho(x)} = \varphi(\ov{x})$. We have
\begin{displaymath}
    \ov{\rho(x)} = \varphi(\ov{x}) = \varphi(x*\ov{1}) =
    \varphi(x)\circ\varphi(\ov{1}) = \psi(x)\circ\ov{\rho(1)} =
    \ov{\psi(x)\rho(1)},
\end{displaymath}
so $\rho(x) = \rho(1)\psi(x)$ for every $x\in G$. Using this observation, we
have
\begin{displaymath}
    \psi(f_1(xy)) = \varphi(f_1(xy)) = \varphi(\ov{x}*\ov{y}) =
    \varphi(\ov{x})\circ \varphi(\ov{y}) = \ov{\rho(x)}\circ \ov{\rho(y)} =
    f_2(\rho(x)\rho(y)) = f_2(\rho(1)^2\psi(xy)).
\end{displaymath}
Equivalently, $f_2^{-1}\psi f_1(x) = \rho(1)^2\psi(x)$ for every $x\in G$. With
$x=1$, we deduce that $\rho(1)^2 = f_2^{-1}\psi f_1(1)$ is a square in $G$, and
that \eqref{Eq:IsoGf} holds.

Conversely, assume that \eqref{Eq:IsoGf} holds for some $\psi\in\aut{G}$, and
that $u^2 = f_2^{-1}\psi f_1(1)$ is a square in $G$. Define $\varphi:G(f_1)\to
G(f_2)$ by $\varphi(x)=\psi(x)$, $\varphi(\ov{x}) = \ov{u\psi(x)}$. Then
\begin{gather*}
    \varphi(x*y) = \varphi(xy) = \psi(xy) = \psi(x)\psi(y) = \psi(x)\circ\psi(y) =
    \varphi(x)\circ\varphi(y),\\
    \varphi(\ov{x}*y) = \varphi(\ov{xy}) = \ov{u\psi(xy)} =
    \ov{u\psi(x)\psi(y)} = \ov{u\psi(x)}\circ\psi(y) = \varphi(\ov{x})\circ
    \varphi(y),
\end{gather*}
and, similarly, $\varphi(x*\ov{y}) = \varphi(x)\circ\varphi(\ov{y})$ for every
$x$, $y\in G$. Finally, using \eqref{Eq:IsoGf} to obtain the third equality
below, we have
\begin{displaymath}
    \varphi(\ov{x}*\ov{y}) = \varphi(f_1(xy)) = \psi(f_1(xy)) =
    f_2(u^2\psi(xy)) = \ov{u\psi(x)}\circ \ov{u\psi(y)} =
    \varphi(\ov{x})\circ\varphi(\ov{y})
\end{displaymath}
for every $x$, $y\in G$. Thus $G(f_1)\cong G(f_2)$.
\end{proof}

We say that two bijections $f_1$, $f_2$ of $G$ are \emph{conjugate in
$\aut{G}$} if there is $\psi\in\aut{G}$ such that $f_2 = \psi f_1\psi^{-1}$.
The following specialization of Proposition \ref{Pr:IsoGf} will be useful in
the classification of commutative A-loops of order $8$.

\begin{corollary}\label{Cr:IsoGf}
Let $G$ be a commutative group, and let $f_1$, $f_2$ be bijections of $G$ such
that $G(f_1)$, $G(f_2)$ are not groups.
\begin{enumerate}
\item[(i)] If $f_1$, $f_2$ are conjugate in $\aut{G}$ then $G(f_1)\cong
    G(f_2)$.
\item[(ii)] If $f_1(1)=1=f_2(1)$ then $G(f_1)\cong G(f_2)$ if and only if
    $f_1$, $f_2$ are conjugate in $\aut{G}$.
\item[(iii)] If $f_2\in\aut{G}$, $t$ is a square in $G$ and
    $f_1(x)=f_2(x)t$ for every $x\in G$ then $G(f_1)\cong G(f_2)$.
\end{enumerate}
\end{corollary}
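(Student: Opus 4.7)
The plan is to apply Proposition \ref{Pr:IsoGf} in each of the three cases, either by producing an explicit witness $\psi\in\aut{G}$ for condition \eqref{Eq:IsoGf}, or by extracting one from a given isomorphism. The key mechanical observation throughout is that Proposition \ref{Pr:IsoGf} reduces the question of isomorphism to verifying \eqref{Eq:IsoGf} together with the square condition on $f_2^{-1}\psi f_1(1)$, so in each part it suffices to pick a natural candidate for $\psi$ and check that these two conditions collapse to something trivial.

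For (i), suppose $f_2=\psi f_1\psi^{-1}$ with $\psi\in\aut{G}$. I would rewrite this as $f_2^{-1}\psi f_1=\psi$, giving $f_2^{-1}\psi f_1(x)=\psi(x)$ for every $x$, and in particular $f_2^{-1}\psi f_1(1)=\psi(1)=1=1^2$. Condition \eqref{Eq:IsoGf} becomes the tautology $\psi(x)=1\cdot\psi(x)$, and Proposition \ref{Pr:IsoGf} delivers the isomorphism.

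Part (ii) in direction $(\Leftarrow)$ follows directly from (i). For $(\Rightarrow)$, Proposition \ref{Pr:IsoGf} yields $\psi\in\aut{G}$ satisfying \eqref{Eq:IsoGf}, and the extra hypothesis $f_1(1)=1=f_2(1)$ forces $f_2^{-1}\psi f_1(1)=f_2^{-1}(\psi(1))=f_2^{-1}(1)=1$. Then \eqref{Eq:IsoGf} collapses to $f_2^{-1}\psi f_1(x)=\psi(x)$, equivalently $f_2=\psi f_1\psi^{-1}$, as required. For (iii), I would try the simplest possible witness $\psi=\id$. Because $f_2\in\aut{G}$, we have $f_2^{-1}f_1(x)=f_2^{-1}(f_2(x)t)=x\cdot f_2^{-1}(t)=f_2^{-1}(t)\cdot\psi(x)$, which is precisely \eqref{Eq:IsoGf} with $f_2^{-1}\psi f_1(1)=f_2^{-1}(t)$; and writing $t=s^2$ yields $f_2^{-1}(t)=f_2^{-1}(s)^2$, a square, since $f_2^{-1}\in\aut{G}$.

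The only point that requires genuine attention is the forward direction of (ii): one must notice that the hypotheses on $f_1(1)$ and $f_2(1)$ are exactly what is needed to kill the translation factor $f_2^{-1}\psi f_1(1)$ in \eqref{Eq:IsoGf}, forcing conjugacy in $\aut{G}$. The other parts are immediate once the natural candidate for $\psi$ is chosen, namely the conjugating automorphism in (i) and the identity in (iii).
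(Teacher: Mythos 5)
Your proposal is correct and follows essentially the same route as the paper: part (i) by observing $f_2^{-1}\psi f_1=\psi$, part (ii) by using $f_1(1)=f_2(1)=1$ to kill the translation factor in \eqref{Eq:IsoGf}, and part (iii) by taking $\psi=\id$ and using that $f_2^{-1}\in\aut{G}$ sends squares to squares. No gaps.
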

\begin{proof}
(i) Let $\psi\in\aut{G}$ be such that $f_2=\psi f_1\psi^{-1}$. Then
$f_2^{-1}\psi f_1 = \psi$, so $f_2^{-1}\psi f_1(1) = \psi(1) = 1$ is a square
in $G$ and \eqref{Eq:IsoGf} holds.

(ii) Assume that $G(f_1)\cong G(f_2)$. Then there is $\psi\in\aut{G}$ such that
\eqref{Eq:IsoGf} holds. Since $f_2^{-1}\psi f_1(1) = f_2^{-1}\psi(1) =
f_2^{-1}(1) = 1$, we deduce from \eqref{Eq:IsoGf} that $f_1$, $f_2$ are
conjugate in $\aut{G}$. The converse follows by (i).

(iii) Let $\psi$ be the trivial automorphism of $G$. Then $\eqref{Eq:IsoGf}$
becomes $f_2^{-1}f_1(x) = f_2^{-1}f_1(1)\cdot x$, and it is our task to check
this identity and that $f_2^{-1}f_1(1)$ is a square in $G$. Now,
$f_2^{-1}f_1(1) = f_2^{-1}(f_2(1)t) = f_2^{-1}(f_2(1))f_2^{-1}(t) =
f_2^{-1}(t)$ is a square in $G$ since $t$ is. Moreover, $f_1(1) = f_2(1)\cdot t
= t$, so $f_1(x)=f_1(1)f_2(x)$, and \eqref{Eq:IsoGf} follows upon applying
$f_2^{-1}$ to this equality.
\end{proof}

Finally, we describe all commutative A-loops with middle nucleus of index $2$.

\begin{proposition}\label{Pr:AGf}
Let $Q$ be a commutative loop possessing a subgroup of index $2$. Then the
following conditions are equivalent:
\begin{enumerate}
\item[(i)] $Q$ is an A-loop and $[Q:\mnuc{Q}]\le 2$.
\item[(ii)] $Q=G(f)$, where $G$ is a commutative group, $[Q:G]=2$, and
$f$ is a permutation of $G$ satisfying
\begin{align}
    &f(xy) = f(x)f(y)f(1)^{-1},\label{Eq:f1}\tag{$P_1$}\\
    &f(x^2) = x^2f(1),\label{Eq:f2}\tag{$P_2$}\\
    &f^2(x)^2f(x)^{-2}=f^2(1)\label{Eq:f3}\tag{$P_3$}
\end{align}
for every $x$, $y\in G$.
\item[(iii)] $Q=G(f)$, where $G$ is a commutative group, $[Q:G]=2$, and
$f$ is a permutation of $G$ satisfying \eqref{Eq:f1}, \eqref{Eq:f2} and
$f^2(1) = f(1)^2$.
\item[(iv)] $Q=G(f)$, where $G$ is a commutative group, $[Q:G]=2$,
$f(x)=g(x)t$ for every $x\in G$, $g\in\aut{G}$, $g(x^2)=x^2$ for every
$x\in G$, and $t$ is a fixed point of $g$.
\end{enumerate}
\end{proposition}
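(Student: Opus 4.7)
The plan is to establish the cycle (ii)$\Leftrightarrow$(iii)$\Leftrightarrow$(iv) by direct algebraic manipulation on $f$, then close with (iv)$\Rightarrow$(i) and (i)$\Rightarrow$(ii), both of which reduce to analyzing the A-identity \eqref{Eq:A} on the loop $G(f)$ supplied by Corollary \ref{Cr:Gf}.

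The equivalences among (ii), (iii), (iv) are purely computational. For (ii)$\Leftrightarrow$(iii): combining \eqref{Eq:f1} with $y=x$ and \eqref{Eq:f2} yields $f(x)^2 = x^2 f(1)^2$; iterating this identity with $x$ replaced by $f(x)$ gives $f^2(x)^2 = x^2 f(1)^4$, whence $f^2(x)^2 f(x)^{-2} = f(1)^2$, so under $(P_1)$ and $(P_2)$ the condition $(P_3)$ collapses to $f^2(1) = f(1)^2$. For (iii)$\Leftrightarrow$(iv): set $t = f(1)$ and $g(x) = f(x) t^{-1}$; then $(P_1)$ is equivalent to $g$ being a multiplicative homomorphism (hence an element of $\aut{G}$, since $f$ is bijective), $(P_2)$ translates to $g(x^2) = x^2$, and $f^2(1) = f(1)^2$ becomes $g(t) = t$. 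The reverse direction, starting from $f(x) = g(x)t$, is a substitution check.

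The heart of the argument is (i)$\Leftrightarrow$(ii). By Corollary \ref{Cr:Gf} we write $Q = G(f)$. Since $G \leq \mnuc{Q}$ by Lemma \ref{Lm:PropertiesGf}(iii), a direct computation using Lemma \ref{Lm:PropertiesGf}(ii) shows that the inner map $L_{y,x}(w) = xy\ld x(yw)$ is the identity whenever $y \in G$, irrespective of whether $x\in G$ or $x\in\ov{G}$; thus \eqref{Eq:A} is automatic in these cases, and only $y = \ov{b}$ needs attention. The key reductions I expect are: with $(x,y)=(\ov{a},\ov{b})$ and $u,v\in G$, \eqref{Eq:A} reduces to $f(abu)f(abv) = f(ab)f(abuv)$, and setting $a=b=1$ produces $(P_1)$; introducing $k(x) = f(x)f(1)^{-1}$, which becomes a homomorphism once $(P_1)$ holds and for which $f^{-1}(xf(1)) = k^{-1}(x)$, the case $(x,y)=(a,\ov{b})$ with $u=\ov{c},v=\ov{d}$ collapses to $k^{-1}(a)^2 = a^2$, equivalent to $(P_2)$; and the case $(x,y)=(\ov{a},\ov{b})$ with $u=\ov{c},v=\ov{d}$ collapses, after using $(P_1)$ and $(P_2)$ to simplify, to $k(f(1)) = f(1)$, i.e.\ to $f^2(1) = f(1)^2$. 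All these reductions are reversible, so the same identifications yield (ii)$\Rightarrow$(i).

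The principal obstacle is clean bookkeeping: \eqref{Eq:A} has sixteen a priori cases (indexed by where each of $x,y,u,v$ lies in $G\cup\ov{G}$), and I must verify that the only independent constraints among them are $(P_1)$, $(P_2)$, and $f^2(1)=f(1)^2$. I expect the mixed subcases (e.g.\ $u\in G$, $v\in\ov{G}$) to follow automatically from the three key conditions via routine manipulation with $(P_1)$, but confirming this redundancy without missing a hidden constraint on $f$ is the most error-prone step of the plan.
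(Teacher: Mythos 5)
Your proposal follows essentially the same route as the paper: reduce to $Q=G(f)$ via Corollary \ref{Cr:Gf}, observe that \eqref{Eq:A} is automatic when $y\in G\le\mnuc{Q}$, extract \eqref{Eq:f1}, \eqref{Eq:f2} and $f^2(1)=f(1)^2$ from three of the remaining bar-patterns while checking that the others impose nothing new, and then pass among (ii), (iii), (iv) by elementary manipulation with $g(x)=f(x)f(1)^{-1}$. Your handling of (ii)$\Leftrightarrow$(iii) --- showing that $f^2(x)^2f(x)^{-2}=f(1)^2$ holds identically under \eqref{Eq:f1} and \eqref{Eq:f2}, so that \eqref{Eq:f3} is visibly equivalent to $f^2(1)=f(1)^2$ --- is a marginally cleaner packaging of the paper's computation, but the argument is the same in substance.
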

\begin{proof}
By Corollary \ref{Cr:Gf}, we can assume that $Q=G(f)=(G\cup \ov{G},*)$, where
$G\le \mnuc{Q}$ is a commutative group and $f$ is a bijection of $G$. Let us
establish the equivalence of (i) and (ii).

Denote by $\alpha(a,b,c,d)$ the $*$ version of \eqref{Eq:A}, namely
\begin{displaymath}
    (a*b)\ld (a*(b*(c*d))) = [(a*b)\ld (a*(b*c))]*[(a*b)\ld (a*(b*d))],
\end{displaymath}
where $a$, $b$, $c$, $d$ are taken from $G\cup\ov{G}$, and where $\ld$ is
understood in $(Q,*)$. With the exception of the variables $a$, $b$, $c$, $d$,
we implicitly assume that variables without bars are taken from $G$, while
variables with bars are taken from $\ov{G}$.

Then $\alpha(x,y,u,v)$ holds in $G(f)$, as the evaluation of $\alpha(x,y,u,v)$
takes place in the group $G$. Since $y\in \mnuc{Q}$, $\alpha(a,y,c,d)$ holds.
By commutativity of $*$, $\alpha(a,b,c,d)$ holds if and only if
$\alpha(a,b,d,c)$ holds. Hence it remains to investigate the identities
$\alpha(x,\ov{y},u,v)$, $\alpha(x,\ov{y},u,\ov{v})$,
$\alpha(x,\ov{y},\ov{u},\ov{v})$, $\alpha(\ov{x},\ov{y},u,v)$,
$\alpha(\ov{x},\ov{y},u,\ov{v})$, and $\alpha(\ov{x},\ov{y},\ov{u},\ov{v})$.

Straightforward calculation with \eqref{Eq:Gf} and Lemma \ref{Lm:PropertiesGf}
shows that $\alpha(\ov{x},\ov{y},u,\ov{v})$ holds if and only if
\begin{equation}\label{Eq:Case1}
    f(yuv) = f(xy)^{-1}f(xyu)f(yv).
\end{equation}
Using $x=y=1$, \eqref{Eq:Case1} reduces to \eqref{Eq:f1}. On the other hand,
\eqref{Eq:f1} already implies \eqref{Eq:Case1}, and so
$\alpha(\ov{x},\ov{y},u,\ov{v})$ is equivalent to \eqref{Eq:f1}. From now on,
we will assume that \eqref{Eq:f1} holds and denote $f(1)$ by $t$.

The identity $\alpha(x,\ov{y},\ov{u},\ov{v})$ is then equivalent to
\begin{equation}\label{Eq:Case2}
    x^{-1}t^{-1}=f(x^{-2})f(y^{-2})f(y)^2xt^{-5},
\end{equation}
and since $t=f(yy^{-1})=f(y)f(y^{-1})t^{-1}$ yields
\begin{equation}\label{Eq:fInverse}
    f(y^{-1})=f(y)^{-1}t^2,
\end{equation}
we can rewrite \eqref{Eq:Case2} as $f(x)^2=x^2t^2$, or, equivalently (using
\eqref{Eq:f1}), as \eqref{Eq:f2}.

Finally, note that \eqref{Eq:f1} and \eqref{Eq:fInverse} imply
\begin{equation}\label{Eq:ff}
    f^2(uv) = f(f(uv)) = f(f(u)f(v)t^{-1}) = f^2(u)f^2(v)f(t^{-1})t^{-2}
    = f^2(u)f^2(v)f(t)^{-1}.
\end{equation}
Using \eqref{Eq:ff} and \eqref{Eq:fInverse}, we see, after a lengthy
calculation, that the identity $\alpha(\ov{x},\ov{y},\ov{u},\ov{v})$ is
equivalent to \eqref{Eq:f3}.

We leave it to the reader to check that the identities $\alpha(x,\ov{y},u,v)$,
$\alpha(x,\ov{y},u,\ov{v})$, $\alpha(x,\ov{y},\ov{u},\ov{v})$ imply no
additional conditions on $f$ beside \eqref{Eq:f1}--\eqref{Eq:f3}, and,
conversely, that if \eqref{Eq:f1}--\eqref{Eq:f3} are satisfied then the
identities $\alpha(x,\ov{y},u,v)$, $\alpha(x,\ov{y},u,\ov{v})$,
$\alpha(x,\ov{y},\ov{u},\ov{v})$ hold.

We have proved the equivalence of (i) and (ii).

Assume that (ii) holds. With $x=1$ in \eqref{Eq:f3} we have
$f^2(1)^2f(1)^{-2}=f(t)$, or $f(t)^2t^{-2}=f(t)$, or $f(t)=t^2$, so (iii)
holds. Conversely, assume that (iii) holds. Then, $f^2(x)^2f(t)^{-1} =
f^2(x)^2t^{-2} = f(f(x))f(f(x))t^{-2} = f(f(x)f(x))t^{-1}=f(f(x)^2)t^{-1} =
f(x)^2$, which is \eqref{Eq:f3}, so (ii) holds.

Assume that (iii) holds and define $g$ by $g(x)=f(x)t^{-1}$, where $t=f(1)$.
Then $g(xy)=f(xy)t^{-1} = f(x)f(y)t^{-2} = f(x)t^{-1}f(y)t^{-1} = g(x)g(y)$ by
\eqref{Eq:f1}, $g(x^2) = f(x^2)t^{-1} = x^2$ by \eqref{Eq:f2}, and $g(t) =
f(t)t^{-1} = t$ by $f(t)=t^2$. Conversely, assume that (iv) holds,
$f(x)=g(x)t$, $g\in\aut{G}$, where $g(x^2)=x^2$ and $t$ is a fixed point of $g$
(not necessarily satisfying $t=f(1)$). Then $f(1) = g(1)t=t$, $f(xy) = g(xy)t =
g(x)g(y)t = g(x)tg(y)tt^{-1} = f(x)f(y)t^{-1}$, $f(x^2) = g(x^2)t = x^2t$, and
$f(t)=g(t)t = t^2$, proving (iii).
\end{proof}

\section{Constructions of commutative A-loops with middle nucleus of
index $2$}\label{Sc:AppsIndex2}

As an application of Proposition \ref{Pr:AGf}, we classify all commutative
A-loops of order $8$ and present a class of commutative A-loops of exponent $2$
with trivial center and middle nucleus of index $2$.

\subsection{Commutative A-loops of order $8$}\label{Ss:8}

It is not difficult to classify all commutative A-loops of order $8$ up to
isomorphism with a finite model builder, such as Mace4 \cite{Mace4}. It turns
out that there are $4$ nonassociative commutative A-loops of order $8$. All
such loops have middle nucleus of index $2$; a fact for which we do not have a
human proof. But using this fact, we can finish the classification by hand with
Proposition \ref{Pr:IsoGf}, Corollary \ref{Cr:IsoGf} and Proposition
\ref{Pr:AGf}.

\begin{lemma}\label{Lm:TranslatedCenters}
Let $G$ be a commutative loop, $1\ne g\in\aut{G}$ and $t\in G$. Let $f$ be a
bijection of $G$ defined by $f(x)=g(x)t$. Then $Z(G(f)) = Z(G(g))$ as sets, and
$Z(G(g)) = \{x\in G;\;g(x)=x\}$.
\end{lemma}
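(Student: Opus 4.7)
\emph{Plan.} The approach is to reduce both center computations to Lemma~\ref{Lm:PropertiesGf}(v), which expresses $Z(G(h))\cap G$ as the set of $x\in G$ with $h(xy)=xh(y)$ for all $y\in G$, and which moreover guarantees $Z(G(h))\le G$ whenever $G(h)$ is not a group. So the first step I would take is to verify that neither $G(g)$ nor $G(f)$ is a group.

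By Lemma~\ref{Lm:PropertiesGf}(iv), $G(g)$ is a group iff $g$ is a translation of $G$; but a translation $x\mapsto xa$ of a commutative group lies in $\aut{G}$ only when $a=1$, contradicting $g\ne 1$. Since $f(x)=g(x)t$, if $f$ were itself a translation then $g$ would be a translation as well, and hence the identity by the same argument; so $G(f)$ is not a group either. With this in place, Lemma~\ref{Lm:PropertiesGf}(v) reduces the problem to comparing the two conditions
\[
g(xy)=xg(y)\quad\text{for every }y\in G
\]
and
\[
f(xy)=xf(y)\quad\text{for every }y\in G.
\]

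The rest is routine. For the first condition, setting $y=1$ gives $g(x)=x$, and the converse is immediate from $g\in\aut{G}$, which proves the second assertion $Z(G(g))=\{x\in G:g(x)=x\}$. For the second condition, I would expand $f(xy)=g(x)g(y)t$ and $xf(y)=xg(y)t$, cancel $g(y)t$ in the commutative group $G$, and conclude again that the condition is equivalent to $g(x)=x$. Hence $Z(G(f))$ coincides with the same fixed-point set, which yields $Z(G(f))=Z(G(g))$ as sets.

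There is no genuine obstacle; the one point that deserves care is confirming that $G(f)$ is not a group (so that part~(v) applies in its stronger form), and this is where the hypothesis $g\ne 1$ is used in a slightly less obvious way, via $f(x)=g(x)t$.
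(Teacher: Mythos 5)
Your proof is correct and follows essentially the same route as the paper: both arguments first note that neither $g$ nor $f$ is a translation (so $G(g)$, $G(f)$ are nonassociative by Lemma \ref{Lm:PropertiesGf}(iv)), then apply Lemma \ref{Lm:PropertiesGf}(v) and reduce the condition $h(xy)=xh(y)$ to the fixed-point set of $g$. Your only addition is spelling out why $g\ne 1$ rules out $g$ (and hence $f$) being a translation, which the paper leaves implicit.
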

\begin{proof}
Since $g$ is not a translation of $G$, neither is $f$. Hence both $G(g)$ and
$G(f)$ are nonassociative, by Lemma \ref{Lm:PropertiesGf}(iv). By Lemma
\ref{Lm:PropertiesGf}(v), $Z(G(f)) = \{x\in G;\;f(xy)=xf(y)\text{ for every
}y\in G\} = \{x\in G;\;g(xy)t = xg(y)t\text{ for every }y\in G\} = \{x\in
G;\;g(xy)=xg(y)\text{ for every }y\in G\} = Z(G(g))$ and it is also equal to
$\{x\in G;\;g(x)=x\}$ since $g(xy)=g(x)g(y)$.
\end{proof}

Let $Q$ be a nonassociative commutative A-loop of order $8$, necessarily with a
middle nucleus of index $2$. By Proposition \ref{Pr:AGf}, $Q=G(f)$, where $G$
is a commutative group of order $4$ and $f(x)=g(x)t$ for some $g\in\aut{G}$ and
$t\in G$ such that $g(x^2)=x^2$ and $g(t)=t$.

Let $G=\mathbb Z_4=\langle a\rangle$ be the cyclic group of order $4$. The two
automorphisms of $G$ are the trivial automorphism $g=1$ and the transposition
$g=(a,a^3)$; both fix all squares of $G$. Let $g=1$ and $f(x)=g(x)t=xt$ for
some $t\in G$. Then $G(f)$ is a commutative group by Lemma
\ref{Lm:PropertiesGf}(iv). Assume that $g=(a,a^3)$. Then $G(g)$ is a
nonassociative commutative A-loop. The only nontrivial fixed point of $g$ is
$a^2$. Let $f(x) = g(x)a^2$. By Corollary \ref{Cr:IsoGf}(iii), $G(f)\cong
G(g)$.

Now let $G=\mathbb Z_2\times\mathbb Z_2 = \langle a\rangle \times \langle
b\rangle$ be the Klein group. Then $\aut{G} = \{1$, $(a,b)$, $(a,ab)$,
$(b,ab)$, $(a,b,ab)$, $(a,ab,b)\}\cong S_3$. The only square in $G$ is $1$ and
it is trivially fixed by all $g\in\aut{G}$.

If $g=1$ and $f(x)=g(x)t=xt$ for some $t\in G$, $G(g)$ is a commutative group
by Lemma \ref{Lm:PropertiesGf}(iv). Let $g_1=(a,b)$. The choices for $t$ are
$t=1$, $t=ab$. Let $f_1(x)=g_1(x)ab$. Then $G(g_1)$, $G(f_1)$ are
nonassociative commutative A-loops. Since $g_1(xx) = g_1(1) = 1$, $G(g_1)$ has
exponent $2$. Since $f_1(xx) = f_1(1) = ab$, $G(f_1)$ does not have exponent
$2$. Hence $G(g_1)\not\cong G(f_1)$.

Let $g_2=(a,ab)$, and note that the choices for $t$ are $t=1$, $t=b$. Let
$f_2(x) = g_2(x)b$. Since all transpositions of $S_3$ are conjugate in $S_3$,
$G(g_1)\cong G(g_2)$ by Corollary \ref{Cr:IsoGf}(i). Note that $f_1 = \psi^{-1}
f_2 \psi$ with $\psi = (b,ab)$. Hence $G(f_1)\cong G(f_2)$ by Corollary
\ref{Cr:IsoGf}. Similarly, no new nonassociative commutative A-loop of order
$8$ is obtained with $g_3=(b,ab)$.

Let $g_4 = (a,b,ab)$. Then $t=1$ is the only choice, and $G(g_4)$ is a
nonassociative commutative A-loop. By Lemma \ref{Lm:TranslatedCenters},
$Z(G(g_4))=1$ and $Z(G(f_1)) = Z(G(g_1))\cong \mathbb Z_2$. Thus $G(g_4)$ is a
new nonassociative commutative A-loop. Finally, let $g_5 = (a,ab,b)$. Since
$g_4$, $g_5$ are conjugate in $\aut{G}$, $G(g_4)\cong G(g_5)$ by Corollary
\ref{Cr:IsoGf}(i).

\subsection{A class of commutative A-loops of exponent $2$ with trivial
center and middle nucleus of index $2$}\label{Ss:Index2}

Let $\gf{2}$ be the two-element field and let $V$ be a vector space over
$\gf{2}$ of dimension $n\ge 2$. Let $G=(V,+)$ be the corresponding elementary
abelian $2$-group.

Let $\{e_1,\dots,e_n\}$ be a basis of $V$. Define an automorphism of $G$ by
\begin{displaymath}
    g(e_1)=e_2,\quad g(e_2)=e_3,\quad g(e_{n-1})=e_n,\quad g(e_n)=e_1+e_n.
\end{displaymath}
Since $g(x+x) = g(0) = 0 = g(x)+g(x)$, the equivalence of (i) and (iv) in
Proposition \ref{Pr:AGf} with $f=g$ shows that $Q_n = G(f)$ is a commutative
A-loop of order $2^{n+1}$ with nucleus of index at most $2$.

We claim that $g$ has no fixed points besides $0$. Indeed, for
$x=\sum_{i=1}^n\alpha_ie_i$ we have
\begin{displaymath}
    g(x) = \alpha_ne_1 + \alpha_1e_2+\cdots \alpha_{n-2}e_{n-1} +
    (\alpha_{n-1}+\alpha_n)e_n,
\end{displaymath}
so $x=g(x)$ if and only if
\begin{displaymath}
    \alpha_1=\alpha_n,\quad \alpha_2=\alpha_1,\quad
    \alpha_{n-1}=\alpha_{n-2},\quad \alpha_n=\alpha_{n-1}+\alpha_n,
\end{displaymath}
or, $\alpha_1=\cdots = \alpha_n=0$.

Thus Lemma \ref{Lm:TranslatedCenters} implies that $Z(Q_n)=1$, and
$[Q_n:\mnuc{Q_n}]=2$ follows. Finally, $x*x=x+x=0$ and $\ov{x}*\ov{x} = g(x+x)
= 0$ for every $x\in G$, so $Q_n$ has exponent two.

\section{Central extensions based on trilinear forms}\label{Sc:Extensions}

Let $Z$, $K$ be loops. We say that a loop $Q$ is an \emph{extension} of $Z$ by
$K$ if $Z\unlhd Q$ and $Q/Z\cong K$. If $Z\le Z(Q)$, the extension is said to
be \emph{central}.

It is well-known that central extensions of an abelian group $Z$ by a loop $K$
are precisely the loops $K\ltimes_\theta Z$ defined on $K\times Z$ by
\begin{displaymath}
    (x,a)(y,b) = (xy,\,ab\theta(x,y)),
\end{displaymath}
where $\theta:K\times K\to Z$ is a \emph{(loop) cocycle}, that is, a mapping
satisfying $\theta(x,1) = \theta(x,1)=1$ for every $x\in K$.

In \cite[Theorem 6.4]{BP}, Bruck and Paige described all central extensions of
an abelian group $Z$ by an A-loop $K$ resulting in an A-loop $Q$. The cocycle
identity they found is rather complicated, and despite some optimism of Bruck
and Paige, it is by no means easy to construct cocycles that conform to it.

In the commutative case, we deduce from \cite[Theorem 6.4]{BP}:

\begin{corollary}\label{Cr:CentralExtension}
Let $Z$ be an abelian group and $K$ a commutative A-loop. Let $\theta:K\times
K\to Z$ be a cocycle satisfying $\theta(x,y)=\theta(y,x)$ for every $x$, $y\in
K$ and
\begin{equation}\label{Eq:Cocycle}
    F(x,y,z)F(x',y,z)\theta(R_{y,z}(x),R_{y,z}(x')) = F(xx',y,z)\theta(x,x')
\end{equation}
for every $x$, $y$, $z$, $x'\in K$, where
\begin{displaymath}
    F(x,y,z) = \theta(R_{y,z}(x),yz)^{-1}\theta(y,z)^{-1}\theta(xy,z)\theta(x,y).
\end{displaymath}
Then $K\ltimes_\theta Z$ is a commutative A-loop.

Conversely, every commutative A-loop that is a central extension of $Z$ by $K$
can be represented in this manner.
\end{corollary}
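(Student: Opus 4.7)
The plan is to specialize Theorem~6.4 of \cite{BP}, which describes A-loop central extensions in general, to the commutative case. First I would dispose of the commutativity requirement on $Q = K\ltimes_\theta Z$: using commutativity of $K$ and of $Z$, the product $(x,a)(y,b) = (xy,ab\theta(x,y))$ equals $(y,b)(x,a) = (yx,ba\theta(y,x))$ if and only if $\theta(x,y) = \theta(y,x)$. Hence symmetry of $\theta$ is equivalent to $Q$ being commutative.

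Next I would reduce the several Bruck--Paige cocycle conditions (one for each family of inner mappings) to the single identity \eqref{Eq:Cocycle}. In any commutative loop $T_x = L_x^{-1}R_x = \id$, so the condition coming from $T$-inner maps is vacuous. Moreover, a direct calculation using $L_x = R_x$ shows that $L_{x,y} = R_{x,y}$ as permutations of $K$, so the $L$- and $R$-inner conditions coincide. Thus only the requirement that each $R_{y,z}$ be an automorphism of $Q$ remains.

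To convert this requirement into \eqref{Eq:Cocycle}, I would compute $R_{(y,1),(z,1)}(x,a) = R_{(y,1)(z,1)}^{-1}R_{(z,1)}R_{(y,1)}(x,a)$ step by step in the extension, using $(y,1)(z,1) = (yz,\theta(y,z))$ and the multiplication rule. The first coordinate evaluates to $R_{y,z}(x)$, which is already an automorphism-image in $K$ because $K$ is a commutative A-loop; the second coordinate produces exactly a correction $a\cdot F(x,y,z)$ with $F$ in the form stated. Expanding the automorphism condition $R_{y,z}\bigl((x,a)(x',a')\bigr) = R_{y,z}(x,a)\cdot R_{y,z}(x',a')$ and cancelling the first-coordinate contribution (which already balances because $R_{y,z}\in\aut{K}$) then yields precisely \eqref{Eq:Cocycle}. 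For the converse, I would use the standard cohomological dictionary: given any commutative A-loop central extension $Z \unlhd Q$ with $Q/Z \cong K$, pick a set-theoretic section $s:K \to Q$ with $s(1) = 1$ and define $\theta(x,y) \in Z$ by $s(x)s(y) = s(xy)\theta(x,y)$; the resulting $\theta$ is automatically symmetric and satisfies \eqref{Eq:Cocycle} by the forward direction applied in reverse.

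The main obstacle is the bookkeeping inside the computation of $F$: one must be careful to use $(y,1)(z,1) = (yz,\theta(y,z))$ rather than $(yz,1)$ when inverting $R_{(y,1)(z,1)}$, and to verify that after commutativity is imposed no residual conditions from the $L$- and $T$-branches of \cite[Theorem~6.4]{BP} survive beyond symmetry and \eqref{Eq:Cocycle}. Once the formula for $F(x,y,z)$ is in hand, the remaining equivalence with \eqref{Eq:Cocycle} is a mechanical expansion.
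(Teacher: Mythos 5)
Your proposal is correct and follows essentially the same route as the paper, which simply deduces the corollary by specializing Theorem~6.4 of Bruck--Paige to the commutative case (where $T_x=\id$ and $L_{x,y}=R_{x,y}$, so only the $R$-inner condition survives); your computation of $R_{(y,1),(z,1)}(x,a)=(R_{y,z}(x),\,aF(x,y,z))$ and the resulting identity \eqref{Eq:Cocycle} is exactly the specialization the authors have in mind. The only point worth making explicit is that since $Z$ is central, the inner mappings $R_{(y,b),(z,c)}$ depend only on the cosets, so checking the generators $R_{(y,1),(z,1)}$ suffices.
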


\begin{corollary}
Let $Z$ be an elementary abelian $2$-group and $K$ a commutative A-loop of
exponent two. Let $\theta:K\times K\to Z$ be a cocycle satisfying $\theta(x,y)
= \theta(y,x)$ for every $x$, $y\in K$, $\theta(x,x)=1$ for every $x\in K$, and
\begin{multline}\label{Eq:Cocycle2}
    \theta(x,y)\theta(x',y)\theta(xx',y)\theta(x,x')\theta(xy,z)\theta(x'y,z)\theta(y,z)\theta((xx')y,z)=\\
    \theta(R_{y,z}(x),yz)\theta(R_{y,z}(x'),yz)\theta(R_{y,z}(xx'),yz)\theta(R_{y,z}(x),R_{y,z}(x'))
\end{multline}
for every $x$, $y$, $z$, $x'\in K$. Then $K\ltimes_\theta Z$ is a commutative
A-loop of exponent two.

Conversely, every commutative A-loop of exponent two that is a central
extension of $Z$ by $K$ can be represented in this manner.
\end{corollary}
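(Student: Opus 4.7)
The plan is to reduce to Corollary \ref{Cr:CentralExtension}, which already characterizes when a central extension $K \ltimes_\theta Z$ of an abelian group $Z$ by a commutative A-loop $K$ is itself a commutative A-loop. The task is then to identify the extra conditions imposed by exponent two on $Q = K \ltimes_\theta Z$ and to simplify the unwieldy cocycle identity \eqref{Eq:Cocycle} using the fact that now every element of $Z$ squares to the identity.

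First I would dispose of the exponent two requirement on $Q$. A direct computation gives $(x,a)^2 = (x^2, a^2 \theta(x,x))$, which shows that $Q$ has exponent two if and only if $K$ has exponent two (take $a=1$), $Z$ has exponent two (take $x=1$), and $\theta(x,x) = 1$ for every $x \in K$. The first two items are hypotheses of the corollary, so the only genuinely new condition contributed by exponent two is the diagonal identity $\theta(x,x) = 1$.

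Next, I would verify that, under the hypothesis that $Z$ has exponent two, the identity \eqref{Eq:Cocycle} is equivalent to \eqref{Eq:Cocycle2}. Since every element of $Z$ is its own inverse, the two inverse signs inside
\[
F(x,y,z) = \theta(R_{y,z}(x),yz)^{-1}\theta(y,z)^{-1}\theta(xy,z)\theta(x,y)
\]
may be dropped. Expanding $F(x,y,z)F(x',y,z)$ on the left side of \eqref{Eq:Cocycle} produces two copies of $\theta(y,z)$, which multiply to $1$ and cancel. Moving all the remaining factors to a single side (legitimate because every factor is an involution) and then separating the $R_{y,z}$-free terms from the $R_{y,z}$-terms yields precisely \eqref{Eq:Cocycle2}. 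This step is mechanical bookkeeping, but it is the only real calculation in the proof.

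The forward direction then follows: under the stated hypotheses, Corollary \ref{Cr:CentralExtension} gives that $K \ltimes_\theta Z$ is a commutative A-loop, and the three ingredients (exponent two on $K$, exponent two on $Z$, and $\theta(x,x)=1$) force exponent two on $Q$. For the converse, a commutative A-loop $Q$ of exponent two that is a central extension of $Z$ by $K$ is already of the form $K \ltimes_\theta Z$ with $\theta$ symmetric and satisfying \eqref{Eq:Cocycle}, by Corollary \ref{Cr:CentralExtension}; exponent two on $Q$ then forces $\theta(x,x) = 1$, and the equivalence above converts \eqref{Eq:Cocycle} into \eqref{Eq:Cocycle2}. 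I do not anticipate any conceptual obstacle; the only place where care is needed is the bookkeeping in rearranging \eqref{Eq:Cocycle}, where the four $R_{y,z}$-factors (three from the three copies of $F$, plus $\theta(R_{y,z}(x), R_{y,z}(x'))$) must be matched correctly against the eight $R_{y,z}$-free factors on the left of \eqref{Eq:Cocycle2}.
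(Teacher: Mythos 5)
Your proposal is correct and follows the route the paper intends: the paper states this corollary without proof as a direct specialization of Corollary \ref{Cr:CentralExtension}, and your two observations --- that $(x,a)^2=(x^2,a^2\theta(x,x))$ reduces the exponent-two condition on $Q$ to $\theta(x,x)=1$, and that dropping inverses and cancelling the two copies of $\theta(y,z)$ turns \eqref{Eq:Cocycle} into \eqref{Eq:Cocycle2} --- are exactly the needed verifications. The bookkeeping checks out (four $R_{y,z}$-factors against eight $R_{y,z}$-free factors after cancellation), so nothing is missing.
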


When $K$ is an elementary abelian $2$-group, the cocycle identity
\eqref{Eq:Cocycle2} can be rewritten as
\begin{align}
    &\theta(x,y)\theta(x',y)\theta(xx',y)\notag\\
    &\theta(xy,z)\theta(x'y,z)\theta(xx',z)\label{Eq:CocycleLines}\\
    &\theta(x,yz)\theta(x',yz)\theta(xx',yz)\notag\\
    &\theta(y,z)\theta(xx',z)\theta((xx')y,z)=1.\notag
\end{align}
Since every line above is of the form $\theta(u,w)\theta(v,w)\theta(uv,w)$, it
is tempting to try to satisfy \eqref{Eq:Cocycle2} by imposing
$\theta(u,w)\theta(v,w)\theta(uv,w)=1$ for every $u$, $v$, $w\in K$. However,
that identity already implies associativity. A nontrivial solution to the
cocycle identity for commutative A-loops of exponent two can be obtained as
follows:

\begin{proposition}\label{Pr:Trilinear}
Let $Z=\gf{2}$ and let $K$ be an elementary abelian $2$-group. Let
$g:K^3\to\gf{2}$ be a trilinear form such that $g(x,xy,y) = g(y,xy,x)$ for
every $x$, $y\in K$. Define $\theta:K^2\to\gf{2}$ by $\theta(x,y) = g(x,xy,y)$.
Then $Q=K\ltimes_\theta Z$ is a commutative A-loop of exponent $2$. Moreover,
$(y,b)\in \mnuc{Q}$ if and only if for every $x$, $z\in K$ we have
$g(y,x,z)=g(x,z,y)$.
\end{proposition}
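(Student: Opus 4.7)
The plan is to switch $K$ to additive notation as a vector space over $\gf{2}$ (so $xy \mapsto x+y$ and $1 \mapsto 0$) and reduce every verification to trilinear algebra. By trilinearity,
\[
\theta(x,y) = g(x, x+y, y) = g(x,x,y) + g(x,y,y).
\]
The cocycle normalizations $\theta(x,0) = g(x,x,0) + g(x,0,0) = 0$ and the exponent-two identity $\theta(x,x) = g(x,0,x) = 0$ are immediate (each monomial has a $0$ argument), and the symmetry $\theta(x,y) = \theta(y,x)$ is exactly the hypothesis $g(x,xy,y) = g(y,xy,x)$ rewritten. Thus all structural requirements of the preceding corollary are met except \eqref{Eq:CocycleLines}.

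The crucial observation for the cocycle identity is that, for any trilinear $g$ over $\gf{2}$,
\[
\theta(u,w) + \theta(v,w) + \theta(u+v, w) \;=\; g(u,v,w) + g(v,u,w) \;=:\; B(u,v,w),
\]
a trilinear form symmetric in its first two slots. Each of the four lines of \eqref{Eq:CocycleLines} has precisely this shape (line $2$ relies on $(xy)(x'y) = xx'$ in $K$), so together they contribute
\[
B(x, x', y) + B(x+y, x'+y, z) + B(x, x', y+z) + B(y, x+x', z).
\]
Expanding each $B$ by trilinearity and using $B(y,y,z) = 2g(y,y,z) = 0$ in $\gf{2}$, the terms cancel pairwise, leaving only $B(x,y,z) + B(y,x,z)$, which vanishes by the symmetry of $B$ in its first two arguments. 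Thus \eqref{Eq:CocycleLines} holds, and the corollary yields that $Q$ is a commutative A-loop of exponent two. Note that the symmetry hypothesis on $g$ is needed only for commutativity of $\theta$; the A-loop cocycle identity itself is automatic from trilinearity.

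For the middle nucleus, $(y,b) \in \mnuc{Q}$ amounts to the associativity of $((x,a)(y,b))(z,c) = (x,a)((y,b)(z,c))$ for all $x, z \in K$, $a, c \in Z$. Cancelling the central $Z$-coordinates, this reduces to
\[
\theta(x,y) + \theta(x+y, z) = \theta(x, y+z) + \theta(y, z)
\]
for all $x, z \in K$. Expanding via trilinearity, nearly every $g$-monomial appears on both sides, and the residue simplifies to $g(y,x,z) + g(x,z,y)$. Hence $(y,b) \in \mnuc{Q}$ iff $g(y,x,z) = g(x,z,y)$ for all $x, z \in K$, as claimed. The only non-routine step is recognizing the $B$-structure in \eqref{Eq:CocycleLines}; the remainder is bookkeeping with trilinearity over $\gf{2}$.
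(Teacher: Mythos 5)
Your argument is correct and follows essentially the same route as the paper: the key identity $\theta(u,w)+\theta(v,w)+\theta(u+v,w)=g(u,v,w)+g(v,u,w)$ is exactly the paper's observation, the four lines of \eqref{Eq:CocycleLines} are grouped the same way, and the middle-nucleus computation reduces to the same residue $g(y,x,z)+g(x,z,y)$. The only difference is that you spell out the cancellations and the normalization/symmetry checks that the paper leaves implicit.
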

\begin{proof}
Trilinearity alone implies that
$\theta(u,w)\theta(v,w)\theta(uv,w)=g(u,v,w)g(v,u,w)$. The left-hand side of
\eqref{Eq:CocycleLines} can then be rewritten as
\begin{displaymath}
    g(x,x',y)g(x',x,y)g(xy,x'y,z)g(x'y,xy,z)g(x,x',yz)g(x',x,yz)g(y,xx',z)g(xx',y,z),
\end{displaymath}
which reduces to $1$ by trilinearity.

We have $(y,b)\in\mnuc{Q}$ if and only if $\theta(x,y)\theta(xy,z) =
\theta(y,z)\theta(x,yz)$ for every $x$, $z\in K$, and the rest follows from
trilinearity of $g$.
\end{proof}

Let $V=\gf{2}^n$.  Call a $3$-linear form $g:V\to\gf{2}$
\emph{$(1,3)$-symmetric} if $g(x,y,z) = g(z,y,x)$ for every $x$, $y$, $z\in V$.
By Proposition \ref{Pr:Trilinear}, a $(1,3)$-symmetric trilinear form gives
rise to a commutative A-loop $Q$ of exponent $2$, and $(y,b)\in \mnuc{Q}$ if
and only if $g(y,x,z)=g(y,z,x)$ for every $x$, $z$, that is, if and only if the
induced bilinear form $g(y,-,-):V^2\to \gf{2}$ is symmetric.

\begin{lemma}\label{Lm:NewForms}
Let $V$ be a vector space over $\gf{2}$ of dimension at least $3$. Then there
exists a trilinear form $g:V\to\gf{2}$ such that for any $0\ne x\in V$ the
induced bilinear form $g(x,-,-):V^2\to \gf{2}$ is not symmetric.
\end{lemma}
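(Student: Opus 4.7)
The plan is to reformulate the statement as asking for an injective linear map from $V$ into a suitable space of bilinear forms, and then to exhibit one by hand via a cyclic construction.

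\emph{Reformulation.} For trilinear $g$ and $x \in V$, the bilinear form $g(x,-,-)$ fails to be symmetric iff $h_x(y,z) := g(x,y,z) + g(x,z,y)$ is not identically zero. Each $h_x$ is automatically symmetric in $(y,z)$ (the field has characteristic $2$) and vanishes on the diagonal, and the assignment $x \mapsto h_x$ is $\gf{2}$-linear. Writing $\mathcal{A}$ for the space of symmetric bilinear forms on $V$ that vanish on the diagonal, one has $\dim_{\gf{2}} \mathcal{A} = \binom{n}{2} \geq n$ whenever $n \geq 3$, so the lemma reduces to producing $g$ for which the map $\phi \colon V \to \mathcal{A}$, $x \mapsto h_x$, is injective.

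\emph{Construction.} Fix a basis $e_1, \ldots, e_n$ of $V$, read indices modulo $n$, and set $g(e_i, e_j, e_k) = 1$ when $j \equiv i+1$ and $k \equiv i+2 \pmod{n}$ and $g(e_i, e_j, e_k)=0$ otherwise; extend trilinearly. A direct computation then shows that $h_{e_i}$ is the unique element of $\mathcal{A}$ supported on the unordered pair $\{e_{i+1}, e_{i+2}\}$.

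\emph{Injectivity and conclusion.} The $n$ two-element subsets $\{i+1, i+2\} \subseteq \{1, \ldots, n\}$ are pairwise distinct precisely when $n \geq 3$ (this is the sole place the dimension hypothesis is used: for $n=2$ the cycle collapses and the construction fails), so $h_{e_1}, \ldots, h_{e_n}$ are $\gf{2}$-linearly independent in $\mathcal{A}$. Hence $\phi$ is injective, and for every $0 \neq x \in V$ one has $h_x \neq 0$, i.e., $g(x,-,-)$ is not symmetric. The only point that needs any care is this distinctness of cyclic adjacent pairs, which is immediate, so I do not anticipate a real obstacle.
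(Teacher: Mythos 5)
Your proof is correct and follows essentially the same route as the paper's: an explicit cyclic assignment of structure constants $g(e_i,e_j,e_k)$ on a basis, verified by showing that every nonzero $x$ has its asymmetry witnessed on a pair of basis vectors. The only differences are cosmetic---the paper sets $g(e_i,e_i,e_{i+1})=1$ rather than $g(e_i,e_{i+1},e_{i+2})=1$, and checks non-symmetry of $g(x,-,-)$ by a direct two-line evaluation instead of packaging it as linear independence of the alternating forms $h_{e_i}$.
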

\begin{proof}
Let $\{e_1,\dots,e_n\}$ be a basis of $V$. The trilinear form $g$ is determined
by the values $g(e_i,e_j,e_k)\in\gf{2}$, for $1\le i$, $j$, $k\le n$. Set
$g(e_i,e_i,e_{i+1})=1$ for every $i$ (with $e_{n+1}=e_1$) and
$g(e_i,e_j,e_k)=0$ otherwise.

Let $x = \sum\alpha_j e_j$ be such that $\alpha_i\ne 0$ for some $i$. Then
$g(x,e_i,e_{i+1}) = \sum \alpha_j g(e_j,e_i,e_{i+1}) = \alpha_i
g(e_i,e_i,e_{i+1}) = \alpha_i\ne 0$, while, similarly, $g(x,e_{i+1},e_i)=0$.
\end{proof}

\begin{example}\label{Ex:SmallMiddleNucleus}
By Lemma $\ref{Lm:NewForms}$, for every $n\ge 3$ there is a commutative A-loop
$Q$ of exponent $2$ and order $2^{n+1}$ with $\mnuc{Q}=Z(Q)$, $|Z(Q)|=2$.
\end{example}

Let $Q$ be a finite commutative A-loop of exponent $2$. By results of
\cite{JKV}, $|Q|=2^k$ for some $k$. Let $|\mnuc{Q}|=2^\ell$. We show how to
realize all possible pairs $(k,\ell)$ with $\ell>0$.

\begin{lemma}\label{Lm:PossibleMiddleNuclei}
Let $k\ge \ell>0$. Then there is a nonassociative commutative A-loop of order
$2^k$ with middle nucleus of order $2^\ell$ if and only if: either $d=k-\ell\ge
3$, or $d\ge 1$ and $\ell\ge 2$.
\end{lemma}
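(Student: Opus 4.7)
The plan is to prove necessity by ruling out the two exceptional pairs $(k,\ell)=(2,1)$ and $(k,\ell)=(3,1)$, and to prove sufficiency by separately constructing loops in three regimes: $d=1$, $d=2$, and $d\ge 3$.

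For necessity I would argue as follows. In the case $(k,\ell)=(2,1)$, $|Q|=4$ and $|\mnuc{Q}|=2$, so Corollary \ref{Cr:Gf} yields $Q=G(f)$ with $G=\mathbb{Z}_2$; since every bijection of a two-element set is a translation, Lemma \ref{Lm:PropertiesGf}(iv) forces $Q$ to be associative, a contradiction. In the case $(k,\ell)=(3,1)$, $Q$ would be a nonassociative commutative A-loop of order $8$ with middle nucleus of order $2$, contradicting the classification recorded in Subsection \ref{Ss:8}, where every such loop has middle nucleus of index exactly $2$.

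For the easy sufficiency cases: when $d=1$ and $\ell\ge 2$, the loop $Q_\ell$ from Subsection \ref{Ss:Index2} already does the job, as it has order $2^{\ell+1}$ and middle nucleus of index $2$. When $d\ge 3$ and $\ell\ge 1$, I would start with the loop from Example \ref{Ex:SmallMiddleNucleus} with its parameter set to $n=d\ge 3$ (order $2^{d+1}$, middle nucleus of order $2$) and take its direct product with $\mathbb{Z}_2^{\ell-1}$. Since the middle nucleus of a direct product is the direct product of middle nuclei and the class of commutative A-loops is closed under direct products, the result is a nonassociative commutative A-loop of order $2^{d+\ell}=2^k$ with middle nucleus of order $2^\ell$.

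The main obstacle is the case $d=2$, $\ell\ge 2$, for which no earlier construction directly applies. Here I plan to invoke Proposition \ref{Pr:Trilinear} with $V=\gf{2}^n$ (taking $n=\ell+1\ge 3$) and the $(1,3)$-symmetric trilinear form $g$ defined by $g(e_1,e_2,e_3)=g(e_3,e_2,e_1)=1$ and $g(e_i,e_j,e_k)=0$ on all other basis triples. The resulting loop $Q$ has order $2^{n+1}=2^{k}$. A routine check shows that $g(y,-,-)$ is symmetric precisely when the $e_1$- and $e_3$-components of $y$ vanish, so the corresponding subspace $W\le V$ has dimension $n-2=\ell-1$, giving $|\mnuc{Q}|=|W|\cdot|\gf{2}|=2^\ell$ and ensuring $Q$ is nonassociative since $W\ne V$.
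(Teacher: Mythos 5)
Your proposal is correct, and it coincides with the paper's proof in three of its four parts: the impossibility of $(3,1)$ via the (computer-assisted) classification of order $8$ in Subsection \ref{Ss:8}, the case $d=1$ via the loops $Q_\ell$ of Subsection \ref{Ss:Index2}, and the case $d\ge 3$ via Example \ref{Ex:SmallMiddleNucleus} and direct products with $\mathbb Z_2^{\ell-1}$. (Your $G(\mathbb Z_2)$-argument for excluding $(2,1)$ is a harmless variant of the paper's observation that any loop of order at most $4$ is a group; like the paper, you leave the trivial case $d=0$ implicit.) The genuine divergence is in the case $d=2$: the paper merely cites the enumeration of \S\ref{Sc:Enumeration} for the existence of a loop with parameters $(4,2)$ and then pads with direct products, whereas you produce, for every $\ell\ge 2$, an explicit loop with parameters $(\ell+2,\ell)$ from Proposition \ref{Pr:Trilinear} by taking $V=\gf{2}^{\ell+1}$ and the $(1,3)$-symmetric form supported on $g(e_1,e_2,e_3)=g(e_3,e_2,e_1)=1$; the computation that $g(y,-,-)$ is symmetric exactly when the $e_1$- and $e_3$-coordinates of $y$ vanish is right, giving $|\mnuc{Q}|=2\cdot 2^{\ell-1}=2^\ell$ and nonassociativity. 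Your version buys a uniform, human-checkable construction that removes one of the two appeals to machine computation (the order-$8$ classification being the remaining, unavoidable one), at the modest cost of one extra explicit verification; the paper's version is shorter but leans on the enumeration table.
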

\begin{proof}
If $d\ge 3$, consider the loop $Q$ of order $2^{d+1}$ with middle nucleus of
order $2$ from Example \ref{Ex:SmallMiddleNucleus}. Then $Q\times(\mathbb
Z_2)^{k-(d+1)}$ achieves the parameters $(k,\ell)$.

Assume that $d=2$. The parameters $(3,1)$ are not possible by
\S\ref{Sc:AppsIndex2}, and the parameters $(4,2)$ are possible (see
\S\ref{Sc:Enumeration}). Then $(k,\ell)$ can be achieved using the appropriate
direct product.

Finally, assume that $d=1$. Then we are done by Subsection \ref{Ss:Index2}. We
obviously must have $\ell\ge 2$, else $|Q|=2^k\le 4$.
\end{proof}

We remark that Lemma \ref{Lm:NewForms} cannot be improved:

\begin{lemma}\label{Lm:LowDimension}
Let $V=\gf{2}^n$ and let $g:V^3\to\gf{2}$ be a $(1,3)$-symmetric trilinear
form. If $n<3$ then there is $0\ne x\in V$ such that the induced form
$g(x,-,-)$ is symmetric.
\end{lemma}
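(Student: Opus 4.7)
\emph{Plan.} I would handle the two cases $n=1$ and $n=2$ separately. Neither requires the $(1,3)$-symmetry hypothesis; the result is essentially a dimension count.

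For $n=1$, the space $V=\gf{2}$ is one-dimensional, so any bilinear form on $V^2$ is determined by a single scalar and is automatically symmetric. Taking $x=1\in V$, the induced form $g(x,-,-)$ is symmetric, and $x\ne 0$.

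For $n=2$, fix a basis $\{e_1,e_2\}$ and write $a_{ijk}=g(e_i,e_j,e_k)$ for $i,j,k\in\{1,2\}$. I want a nonzero $x=\alpha_1 e_1+\alpha_2 e_2$ such that $g(x,-,-)$ is symmetric, and by bilinearity it suffices to check symmetry on the basis, i.e.\ that $g(x,e_i,e_j)=g(x,e_j,e_i)$ for all $i,j\in\{1,2\}$. The case $i=j$ is automatic, and the cases $(i,j)=(1,2)$ and $(i,j)=(2,1)$ yield the same single condition
\begin{equation*}
    \alpha_1(a_{112}+a_{121})+\alpha_2(a_{212}+a_{221})=0.
\end{equation*}
This is one homogeneous $\gf{2}$-linear equation in the two unknowns $\alpha_1,\alpha_2$, so its solution set in $\gf{2}^2$ has cardinality $2$ or $4$; in either case it contains a nonzero vector. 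Picking such a vector gives the desired $x\ne 0$.

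There is no real obstacle here: the only thing to verify carefully is that symmetry on the chosen basis implies symmetry of the bilinear form $g(x,-,-)$ as a whole, which is immediate from bilinearity. The contrast with Lemma~\ref{Lm:NewForms} is that for $n\ge 3$ one has enough freedom in choosing the structure constants $a_{ijk}$ to force the analogous linear system (one equation per unordered pair $\{i,j\}$ with $i\ne j$) to have no nonzero common solution, whereas for $n\le 2$ there is only a single equation to satisfy and the pigeonhole/dimension count closes the argument.
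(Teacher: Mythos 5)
Your proof is correct and is essentially the same argument as the paper's: the paper handles $n=2$ by contradiction, observing that if $g(e_1,e_1,e_2)\ne g(e_1,e_2,e_1)$ and $g(e_2,e_1,e_2)\ne g(e_2,e_2,e_1)$ then $g(e_1+e_2,-,-)$ is symmetric, which is exactly the concrete form of your statement that the single homogeneous equation $\alpha_1(a_{112}+a_{121})+\alpha_2(a_{212}+a_{221})=0$ has a nonzero solution in $\gf{2}^2$. Your side remarks --- that the $(1,3)$-symmetry hypothesis is not actually used, and that for $n\ge 3$ the $\binom{n}{2}$ analogous equations can be made to have only the trivial common solution --- are both accurate.
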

\begin{proof}
There is nothing to show when $n=1$, so assume that $n=2$ and $\{e_1$, $e_2\}$
is a basis of $V$. The form $g$ is determined by the $6$ values
$g(e_1,e_1,e_1)$, $g(e_1,e_1,e_2)$, $g(e_1,e_2,e_1)$, $g(e_1,e_2,e_2)$,
$g(e_2,e_1,e_2)$ and $g(e_2,e_2,e_2)$.

Suppose that no induced form $g(x,-,-)$ is symmetric, for $0\ne x\in V$. Then
$g(e_1,e_1,e_2)\ne g(e_1,e_2,e_1)$, else $g(e_1,-,-)$ is symmetric. Similarly,
$g(e_2,e_1,e_2)\ne g(e_2,e_2,e_1)$. But then $g(e_1+e_2,e_1,e_2) =
g(e_1,e_1,e_2)+g(e_2,e_1,e_2) = g(e_1,e_2,e_1)+g(e_2,e_2,e_1) =
g(e_1+e_2,e_2,e_1)$, hence $g(e_1+e_2,-,-)$ is symmetric, a contradiction.
\end{proof}

\begin{remark}
The many examples presented so far might suggest that $Q/\mnuc{Q}$ is a group
in every commutative A-loop. This is not so: Consider a commutative Moufang
loop $Q$. Then $Q$ is a commutative A-loop, and $\mnuc{Q}=Z(Q)$ since the three
nuclei of $Q$ coincide. So the statement ``$Q/\mnuc{Q}$ is a group'' is
equivalent to ``$Q/Z(Q)$ is an abelian group'', i.e., to ``$Q$ has nilpotency
class at most $2$''. There are commutative Moufang loops of nilpotency class
$3$.
\end{remark}

\begin{problem} Find a smallest commutative A-loop $Q$ in which $Q/\mnuc{Q}$ is
not a group.
\end{problem}

\subsection{Adding group cocycles}

Let $Z$ be an abelian group and $K$ a loop. Then a loop cocycle $\theta:K\times
K\to Z$ is said to be a \emph{group cocycle} if it satisfies the identity
\begin{equation}\label{Eq:GroupCocycle}
    \theta(x,y)\theta(xy,z)=\theta(y,z)\theta(x,yz).
\end{equation}
Note that if $K$ is a group and $\theta$ is a group cocycle then
$K\ltimes_\theta Z$ is a group, too.

\begin{lemma}\label{Lm:AddGroupCocycle}
Let $Z$ be an abelian group, $K$ a group and $\theta$, $\mu:K\times K\to Z$
loop cocycles such that $\nu=\theta\mu^{-1}:(x,y)\mapsto
\theta(x,y)\mu(x,y)^{-1}$ is a group cocycle. Then the left inner mappings in
$K\ltimes_\theta Z$ and $K\ltimes_\mu Z$ coincide.
\end{lemma}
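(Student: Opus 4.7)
The plan is to write down the left inner mappings in $K\ltimes_\theta Z$ explicitly, observe that the $\theta$-dependence is through a single four-term expression, and then use the group cocycle identity for $\nu=\theta\mu^{-1}$ to match this expression against its $\mu$-analogue.

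First I would compute $L_{(x,a)}(u,c)=(xu,ac\theta(x,u))$ and iterate to get
\begin{displaymath}
    L_{(y,b)}L_{(x,a)}(u,c)=(yxu,\,abc\,\theta(x,u)\theta(y,xu)),
\end{displaymath}
using that $K$ is a group (so associativity of the first coordinate is unambiguous). Next, since $(y,b)(x,a)=(yx,\,ab\,\theta(y,x))$, solving $(yx,ab\theta(y,x))(w,d)=(v,e)$ for $(w,d)$ gives $w=(yx)^{-1}v$ and $d=e(ab\theta(y,x)\theta(yx,w))^{-1}$, where we use that $Z$ is abelian to freely rearrange. Plugging in $v=yxu$ and $e=abc\,\theta(x,u)\theta(y,xu)$ produces $w=u$ and, after the $ab$ cancels,
\begin{displaymath}
    L_{(x,a),(y,b)}(u,c)=\bigl(u,\,c\cdot\Phi_\theta(x,y,u)\bigr),\qquad
    \Phi_\theta(x,y,u):=\theta(x,u)\theta(y,xu)\theta(y,x)^{-1}\theta(yx,u)^{-1}.
\end{displaymath}
Note that $\Phi_\theta$ does not depend on $a$ or $b$, which reflects the centrality of the $Z$-part.

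The second step is to compare $\Phi_\theta$ with $\Phi_\mu$. The group cocycle identity \eqref{Eq:GroupCocycle} applied to $\nu=\theta\mu^{-1}$ reads
\begin{displaymath}
    \nu(x,u)\nu(y,xu)=\nu(y,x)\nu(yx,u),
\end{displaymath}
equivalently $\nu(x,u)\nu(y,xu)\nu(y,x)^{-1}\nu(yx,u)^{-1}=1$. Expanding $\nu=\theta\mu^{-1}$ and using that $Z$ is abelian to separate the $\theta$- and $\mu$-factors, this last identity rearranges to $\Phi_\theta(x,y,u)=\Phi_\mu(x,y,u)$. Consequently $L_{(x,a),(y,b)}$ acts the same way on $K\times Z$ regardless of whether we use $\theta$ or $\mu$, so the left inner mapping groups coincide as sets of permutations of the common underlying set $K\times Z$.

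The only delicate point is bookkeeping in the first step, in particular making sure that the appeal to group-inverses of elements of the form $(yx, \ldots)$ in $K\ltimes_\theta Z$ is carried out inside the loop (which need not be a group), and that the $ab$-factor cancels as claimed; everything else is trilinear-style symbol pushing in the abelian group $Z$. There is no genuine obstacle beyond this, but care is needed to avoid confusing associativity in $K$ (which we have) with associativity in $K\ltimes_\theta Z$ (which we do not).
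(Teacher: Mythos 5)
Your proposal is correct and follows essentially the same route as the paper: compute the left inner mapping explicitly as $(u,c)\mapsto(u,c\Phi_\theta)$ with $\Phi_\theta$ a four-term $\theta$-expression, and observe that $\Phi_\theta=\Phi_\mu$ is exactly the group cocycle identity for $\nu=\theta\mu^{-1}$. The only cosmetic difference is your labeling of the subscripts in $L_{(x,a),(y,b)}$ (swapping the roles of $x$ and $y$ relative to the paper's display), which does not affect the argument.
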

\begin{proof}
Calculating in $K\ltimes_\theta Z$, we have
\begin{align*}
    (x,a)(y,b) &= (xy,ab\theta(x,y)),\\
    (x,a)\ld (y,b) &= (x\ld y, a^{-1}b\theta(x,x\ld y)^{-1}).
\end{align*}
Then
\begin{multline}\label{Eq:SameCoset}
    (x,a)(y,b)\ld (x,a)((y,b)(z,c))
        = (xy,ab\theta(x,y))\ld (xyz,abc\theta(x,yz)\theta(y,z)\\
        = (z,c\theta(x,yz)\theta(y,z)\theta(x,y)^{-1}\theta(xy,z)^{-1}).
\end{multline}
Thus the left inner mappings in $K\ltimes_\theta Z$ and $K\ltimes_\mu Z$
coincide if and only if
\begin{displaymath}
    \theta(x,yz)\theta(y,z)\theta(x,y)^{-1}\theta(xy,z)^{-1}
    = \mu(x,yz)\mu(y,z)\mu(x,y)^{-1}\mu(xy,z)^{-1}
\end{displaymath}
for every $x$, $y$, $z\in K$, which happens precisely when $\nu =
\theta\mu^{-1}$ is a group cocycle.
\end{proof}

\begin{lemma}\label{Lm:AddGroupCocycle2}
Let $Z$ be an abelian group, $K$ a group and $\theta:K\times K\to Z$ a cocycle
such that $K\ltimes_\theta Z$ is a commutative A-loop. Let $\mu:K\times K\to Z$
be a group cocycle satisfying $\mu(x,y)=\mu(y,x)$ for every $x$, $y\in K$. Then
$K\ltimes_{\mu\theta} Z$ is a commutative A-loop with the same (left) inner
mappings as $K\ltimes_\theta Z$.
\end{lemma}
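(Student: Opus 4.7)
The plan is to verify three things in sequence: (a) $K\ltimes_{\mu\theta}Z$ is a commutative loop; (b) its left inner mappings agree, as set-theoretic functions, with those of $K\ltimes_\theta Z$; and (c) these common mappings are automorphisms of $K\ltimes_{\mu\theta}Z$. Together (a) and (c) will give the commutative A-loop conclusion, since a commutative loop is an A-loop as soon as all of its left inner mappings are automorphisms.

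First, for (a), I would note that the loop cocycle conditions $(\mu\theta)(x,1)=(\mu\theta)(1,x)=1$ follow immediately from the same conditions for $\mu$ and $\theta$ separately, and that symmetry of $\mu\theta$ follows from the hypothesized symmetry of $\mu$ together with the symmetry of $\theta$ (inherited from commutativity of $K\ltimes_\theta Z$). Part (b) is then immediate from Lemma~\ref{Lm:AddGroupCocycle} applied to the pair $(\mu\theta,\theta)$, since $(\mu\theta)\theta^{-1}=\mu$ is by hypothesis a group cocycle.

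The substance lies in (c), and the key structural observation is that in a central extension by an abelian kernel, every left inner mapping fixes the first coordinate and adjusts the second by a function of the first alone. Concretely, the formula \eqref{Eq:SameCoset} shows that every left inner mapping $\sigma$ of $K\ltimes_\theta Z$ has the form $\sigma(z,c)=(z,c\gamma(z))$ for some $\gamma\colon K\to Z$ (depending on the parameters of $\sigma$ but not on $c$), and by (b) the same formula describes the corresponding inner mapping of $K\ltimes_{\mu\theta}Z$. I would then verify in one line that, for any central extension with product $(z,c)(z',c')=(zz',cc'\lambda(z,z'))$, a map $\sigma$ of this form is an automorphism if and only if $\gamma(zz')=\gamma(z)\gamma(z')$, the cocycle value $\lambda(z,z')$ canceling on both sides. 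This makes the automorphism criterion blind to the specific cocycle, so since $\sigma$ is already known to be an automorphism of $K\ltimes_\theta Z$ (that loop being an A-loop) it is automatically an automorphism of $K\ltimes_{\mu\theta}Z$, completing (c). I do not anticipate any real obstacle beyond this routine cancellation verification; the genuine content of the lemma is the structural observation above, and once it is in hand everything else is bookkeeping.
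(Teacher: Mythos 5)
Your proof is correct and follows essentially the same route as the paper's: Lemma \ref{Lm:AddGroupCocycle} supplies the coincidence of left inner mappings, and the observation from \eqref{Eq:SameCoset} that these mappings preserve the first coordinate means the central factor $(1,\mu(x,y))$ by which the two products differ cancels in the automorphism check. Your packaging of that check as the cocycle-independent condition $\gamma(zz')=\gamma(z)\gamma(z')$ is just a slightly more explicit form of the computation the paper carries out directly.
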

\begin{proof}
Both $Q_\theta = K\ltimes_\theta Z$, $Q_{\mu\theta}=K\ltimes_{\mu\theta} Z$ are
commutative loops. Since $\mu\theta\theta^{-1}$ is a group cocycle,
$Q_{\mu\theta}$ has the same (left) inner mappings as $Q_\theta$, by Lemma
\ref{Lm:AddGroupCocycle}. It therefore remains to show that every left inner
mapping of $Q_{\mu\theta}$ is an automorphism.

Let $(x,a)$, $(y,b)\in K\times Z$ and let $\varphi$ be a left inner mapping of
$Q_{\mu\theta}$ (and hence of $Q_\theta$). Denote by $\cdot$ the multiplication
in $Q_\theta$ and by $*$ the multiplication in $Q_{\mu\theta}$. Then
\begin{displaymath}
    \varphi((x,a)*(y,b)) = \varphi((x,a)\cdot(y,b)\cdot(1,\mu(x,y)))
    = \varphi((x,a))\cdot \varphi((y,b))\cdot (1,\mu(x,y)),
\end{displaymath}
because $(1,\mu(x,y))\in Z$ is a central element. The equation
\eqref{Eq:SameCoset} in fact shows that $\varphi((x,a)) = (x,a')$ for some
$a'$, and similarly, $\varphi((y,b)) = (y,b')$ for some $b'$. Thus
\begin{displaymath}
    \varphi((x,a))\cdot\varphi((y,b))\cdot(1,\mu(x,y))
    = (x,a')\cdot (y,b')\cdot (1,\mu(x,y))
    = (x,a')*(y,b') = \varphi((x,a))*\varphi((y,b)),
\end{displaymath}
proving $\varphi\in\aut{Q_{\mu\theta}}$.
\end{proof}

\section{A class of commutative A-loops of order $p^3$}\label{Sc:p3}

Let $Q$ be a commutative A-loop of odd order. Equivalently, let $Q$ be a finite
commutative A-loop in which the mapping $x\mapsto x^2$ is a bijection of $Q$
(cf. \cite[Lemma 3.1]{JKV}). For $x\in Q$, denote by $x^{1/2}$ the unique
element of $Q$ such that $(x^{1/2})^2 = x$. Define a new operation $\circ$ on
$Q$ by
\begin{displaymath}
    x\circ y = (x^{-1}\ld xy^2)^{1/2}.
\end{displaymath}
By \cite[Lemma 3.5]{JKV}, $(Q,\circ)$ is a Bruck loop. By \cite[Corollary
3.11]{JKV}, $(Q,\circ)$ is commutative if and only if it is isomorphic to $Q$.

\begin{proposition}\label{Pr:p2}
Let $p$ be an odd prime, and let $Q$ be a commutative
A-loop of order $p$, $2p$, $4p$, $p^2$, $2p^2$ or $4p^2$. Then $Q$ is an
abelian group.
\end{proposition}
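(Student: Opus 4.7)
The plan is to exploit the direct product decomposition of finite commutative A-loops into their odd-order and $2$-power-order parts, which is one of the bulleted results from \cite{JKV} cited in the introduction. Writing $Q\cong Q_2\times Q_p$ with $|Q_2|\in\{1,2,4\}$ and $|Q_p|\in\{p,p^2\}$, both commutativity and the A-loop property pass to each factor, so it suffices to prove that each factor is an abelian group; then $Q$ is too.

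The $2$-power factor is handled by direct inspection: up to isomorphism, the only commutative loops of order at most $4$ are the trivial loop, $\mathbb{Z}_2$, $\mathbb{Z}_4$, and the Klein group $\mathbb{Z}_2\times\mathbb{Z}_2$, all abelian groups (a row-by-row multiplication-table check). The case $|Q_p|=p$ is almost as easy: Cauchy's theorem, listed among the cited results from \cite{JKV}, produces an element $x$ of order $p$, and then the power-associative subloop $\langle x\rangle$ is a cyclic group of order $p$, hence equals $Q_p$ by Lagrange.

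The substantive case is $|Q_p|=p^2$ with $p$ odd. Here the plan is to invoke the Bruck loop correspondence introduced just before the proposition: by \cite[Lemma 3.5]{JKV}, the derived operation $x\circ y=(x^{-1}\ld xy^2)^{1/2}$ makes $(Q_p,\circ)$ a Bruck loop of order $p^2$. I would then cite the classical fact that every Bol (hence Bruck) loop of order $p^2$ with $p$ an odd prime is an abelian group, due essentially to Burn's analysis of finite Bol loops. Consequently $(Q_p,\circ)$ is commutative, and \cite[Corollary 3.11]{JKV} then forces $(Q_p,\circ)\cong Q_p$ as loops, so $Q_p$ is an abelian group. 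Taking the direct product with $Q_2$ completes the proof.

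The main obstacle is this last step: the whole argument hinges on the external Bol-loop classification at order $p^2$. Without that citation one would have to argue directly. A plausible direct route: if some element of $Q_p$ has order $p^2$, then $Q_p$ is cyclic and we are done; otherwise $Q_p$ has exponent $p$, and one must show that any two generators commute. That in turn would likely proceed by producing a subloop of order $p$ inside $\mnuc{Q_p}$ (using that $\mnuc{\,}$ is normal in an A-loop by \cite{BP}), running a coset analysis in the spirit of Proposition \ref{Pr:AGf} on the index-$p$ middle-nucleus quotient, and exploiting the odd-order hypothesis to rule out any nontrivial cocycle contribution. The Bruck loop shortcut avoids all of this technical work, which is why it is the key step in the outline above.
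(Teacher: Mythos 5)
Your proposal is correct and follows essentially the same route as the paper: reduce to the factors of order at most $4$ and of order $p$ or $p^2$ via the odd/$2$-power decomposition, dispose of the small cases by Lagrange and power-associativity, and settle the order-$p^2$ case by passing to the associated Bruck loop, invoking Burn's theorem that Bol loops of order $p^2$ are groups, and using \cite[Corollary 3.11]{JKV} to transfer commutativity back to $Q$. The only differences are cosmetic (e.g.\ your appeal to Cauchy's theorem for the order-$p$ factor, where Lagrange alone suffices).
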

\begin{proof}
Loops of order less than $5$ are abelian groups. By the Decomposition Theorem
mentioned in the introduction, it remains to prove that commutative A-loops of
order $p$ and $p^2$ are abelian groups. For $|Q|=p$, this follows from the
Lagrange Theorem and power-associativity. Assume that $|Q|=p^2$. Then
$(Q,\circ)$ is a Bruck loop of order $p^2$, in particular a Bol loop of order
$p^2$. Burn showed in \cite{Burn} that all Bol loops of order $p^2$ are groups,
and hence $(Q,\circ)$ is an abelian group. Consequently, $Q$ is an abelian
group.
\end{proof}

In this section we initiate the study of nonassociative commutative A-loops of
order $p^3$. We conjecture that the class of loops constructed below accounts
for all such loops.

\begin{lemma} There is no commutative A-loop with center of prime index.
\end{lemma}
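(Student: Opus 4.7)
The plan is to argue by contradiction, adapting the classical fact that $G/Z(G)$ cyclic forces a group $G$ to be abelian. Suppose $Q$ is a commutative A-loop with $[Q:Z(Q)]=p$ for some prime $p$. Since $Z(Q)$ is normal in $Q$ and the quotient of an A-loop by a normal subloop is again an A-loop, $Q/Z(Q)$ is a commutative A-loop of prime order $p$. When $p$ is odd, Proposition \ref{Pr:p2} forces $Q/Z(Q)$ to be the cyclic group $\mathbb{Z}_p$; when $p=2$, any loop of order $2$ is trivially $\mathbb{Z}_2$. In either case there is an element $x\in Q$ with $Q/Z(Q)=\langle xZ(Q)\rangle$.

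By power-associativity of commutative A-loops, each power $x^i$ is unambiguous, and every $a\in Q$ can be written as $a=x^i z$ with $0\le i<p$ and $z\in Z(Q)$. I would then deduce associativity of $Q$ directly. Given $a=x^iz_1$, $b=x^jz_2$, $c=x^kz_3$, the product simplifies as
\[
    ab=(x^iz_1)(x^jz_2)=x^{i+j}\,z_1z_2,
\]
because each step either uses power-associativity on the powers of $x$ or moves a central factor $z_1$ or $z_2$ across via commutativity and centrality. Applying the same reduction once more yields
\[
    (ab)c=x^{i+j+k}\,z_1z_2z_3=a(bc),
\]
so $Q$ is a commutative associative loop, i.e., an abelian group. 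But then $Z(Q)=Q$, contradicting $[Q:Z(Q)]=p>1$.

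There is essentially no serious obstacle. The only mildly delicate point is the bookkeeping in the reduction $x^iz_1\cdot x^jz_2=x^{i+j}z_1z_2$: one pulls $z_2$ out of the outer product using $z_2\in Z(Q)$, commutes $z_1$ past $x^j$ using $z_1\in Z(Q)$, and finally collapses $x^ix^j$ to $x^{i+j}$ by power-associativity. Everything else --- normality of $Z(Q)$, the A-loop structure on $Q/Z(Q)$, power-associativity, and Proposition \ref{Pr:p2} for the cyclic description of $Q/Z(Q)$ --- is either standard or already established in the excerpt.
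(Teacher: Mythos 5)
Your proposal is correct and follows essentially the same route as the paper's proof: use Proposition \ref{Pr:p2} to see that $Q/Z(Q)$ is cyclic of prime order, write every element as $x^i z$ with $z\in Z(Q)$, and then derive associativity from power-associativity together with centrality of the $z$'s, contradicting $[Q:Z(Q)]>1$. The only (harmless) difference is that you treat $p=2$ separately, whereas the paper absorbs it into the same citation.
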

\begin{proof}
For a contradiction, let $Q$ be a commutative A-loop such that $|Q/Z(Q)|=p$ for
some prime $p$. By Proposition \ref{Pr:p2}, $Q/Z(Q)$ is the cyclic group of
order $p$. Let $x\in Q\setminus Z(Q)$. Then $|xZ(Q)|=p$ and every element of
$Q$ can be written as $x^iz$, where $0\le i<p$ and $z\in Z(Q)$. With $0\le i$,
$j$, $k<p$ and $z_1$, $z_2$, $z_3\in Z(Q)$ we have
\begin{displaymath}
    (x^iz_1\cdot x^jz_2)\cdot x^kz_3 = (x^ix^j)x^k\cdot z_1z_2z_3 =
    x^i(x^jx^k)\cdot z_1z_2z_3 = x^iz_1\cdot (x^jz_2\cdot x^kz_3)
\end{displaymath}
by power-associativity, so $Q$ is an abelian group with center of prime index,
a contradiction.
\end{proof}

Hence a nonassociative commutative A-loop of order $p^3$ has center of size $1$
or $p$. (By the result announced in the introduction, we know, in fact, that
the center must have size $p$ if $p$ is odd.)

Let $n\ge 1$. The \emph{overflow indicator} is the function $(-,-)_n:\mathbb
Z_n\times\mathbb Z_n\to \{0,1\}$ defined by
\begin{displaymath}
    (x,y)_n = \left\{\begin{array}{ll}
        1,\text{ if $x+y\ge n$,}\\
        0,\text{ otherwise}.
    \end{array}\right.
\end{displaymath}
Denote by $\oplus$ the addition in $\mathbb Z_n$, and note that for $x$,
$y\in\mathbb Z_n$ we have $x\oplus y = x+y-n(x,y)_n$, and thus
\begin{equation}\label{Eq:Indicator}
    (x,y)_n = \frac{x+y-(x\oplus y)}{n}.
\end{equation}

\begin{lemma}\label{Lm:IndicatorGroup}
We have
\begin{equation}\label{Eq:IndicatorGroup}
    (x,y)_n + (x\oplus y,z)_n = (y,z)_n + (x,y\oplus z)_n
\end{equation}
for every $x$, $y$, $z\in\mathbb Z_n$.
\end{lemma}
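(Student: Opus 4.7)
The plan is to reduce the identity to associativity of $\oplus$ in $\mathbb{Z}_n$ by applying the formula \eqref{Eq:Indicator}, which expresses the overflow indicator as an integer-valued function on $\mathbb{Z}_n$.

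First, I would rewrite each of the four summands using $(a,b)_n = (a + b - (a\oplus b))/n$, interpreting $a$, $b$, $a\oplus b$ as ordinary integers in $\{0,1,\dots,n-1\}$. Multiplying \eqref{Eq:IndicatorGroup} through by $n$, the left-hand side becomes
\begin{displaymath}
    \bigl(x + y - (x\oplus y)\bigr) + \bigl((x\oplus y) + z - ((x\oplus y)\oplus z)\bigr)
    = x + y + z - ((x\oplus y)\oplus z),
\end{displaymath}
and, analogously, the right-hand side telescopes to $x + y + z - (x\oplus (y\oplus z))$.

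Associativity of addition in $\mathbb{Z}_n$ gives $(x\oplus y)\oplus z = x\oplus (y\oplus z)$, so the two expressions agree, and dividing by $n$ recovers \eqref{Eq:IndicatorGroup}. There is no real obstacle here: the content of the lemma is simply that $(-,-)_n$ is the coboundary witnessing the fact that reduction modulo $n$ turns integer addition into $\oplus$, and the cocycle condition for $\oplus$ follows from associativity of the underlying integer addition.
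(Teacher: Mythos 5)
Your proof is correct and is essentially the same as the paper's: both substitute \eqref{Eq:Indicator} into each term, observe that the sums telescope to $x+y+z$ minus the triple sum, and invoke associativity of $\oplus$ (which the paper uses implicitly by writing $x\oplus y\oplus z$ without parentheses). No differences worth noting.
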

\begin{proof}
Using \eqref{Eq:Indicator}, the identity \eqref{Eq:IndicatorGroup} can be
rewritten as
\begin{displaymath}
    x+y-(x\oplus y) + (x\oplus y)+z - (x\oplus y\oplus z) = y+z - (y\oplus
    z) + x + (y\oplus z) - (x\oplus y\oplus z),
\end{displaymath}
which holds.
\end{proof}

From now on we write $+$ for the addition in $\mathbb Z_n$, too.

For $n\ge 1$ and $a$, $b\in\mathbb Z_n$, define $\terg{\mathbb Z_n}{a}{b}$ on
$\mathbb Z_n\times\mathbb Z_n\times\mathbb Z_n$ by
\begin{equation}\label{Eq:Terg}
    (x_1,x_2,x_3)(y_1,y_2,y_3) = (x_1+y_1+(x_2+y_2)x_3y_3 + a(x_2,y_2)_n + b(x_3,y_3)_n, x_2+y_2,x_3+y_3).
\end{equation}
Then $\terg{\mathbb Z_n}{a}{b}$ can be seen as a central extension of $\mathbb
Z_n$ by $\mathbb Z_n\times\mathbb Z_n$ via the loop cocycle
$\theta((x_2,x_3),(y_2,y_3)) = (x_2+y_2)x_3y_3+a(x_2,y_2)_n+b(x_3,y_3)_n$, and
hence $\terg{\mathbb Z_n}{a}{b}$ is a commutative loop with neutral element
$(0,0,0)$.

Note that we can write $\theta$ as $\theta=\mu+\nu$, where
$\mu((x_2,y_2),(x_3,y_3)) = (x_2+y_2)x_3y_3$ and $\nu((x_2,y_2),(x_3,y_3)) =
a(x_2,y_2)_n + b(x_3,y_3)_n$. By Lemma \ref{Lm:IndicatorGroup}, $\nu$ is a
group cocycle.

\begin{proposition}\label{Pr:Terg}
Let $n\ge 2$ and $a$, $b\in\mathbb Z_n$. Let $Q=\terg{\mathbb Z_n}{a}{b}$ and
$x=(x_1,x_2,x_3)$, $y=(y_1,y_2,y_3)$, $z=(z_1,z_2,z_3)\in Q$. Then:
\begin{enumerate}
\item[(i)] $x\ld y = (y_1-x_1-(y_3-x_3)x_3y_2-a(x_2,y_2-x_2)_n -
b(x_3,y_3-x_3)_n,y_2-x_2,y_3-x_3)$,
\item[(ii)] $L_{y,x}(z) = xy\ld x(yz) = (z_1+y_3(x_3z_2-x_2z_3),z_2,z_3)$,
\item[(iii)] $Q$ is a nonassociative commutative A-loop of order $n^3$,
\item[(iv)] $\lnuc{Q} = Z(Q) = \mathbb Z_n\times 0\times 0$, $\mnuc{Q} = \mathbb
Z_n\times\mathbb Z_n\times 0$ as subsets of $Q$,
\item[(v)] $Q/Z(Q)\cong\inn{Q}\cong \mathbb Z_n\times\mathbb Z_n$, and
    $\inn{Q} = \{L_{u,v};\;u,v\in Q\}$,
\item[(vi)] for every $m\ge 0$, $x^m = (mx_1+2\binom{m+1}{3}x_2x_3^2 +
    at_2+bt_3,mx_2,mx_3)$, where $t_i = \sum_{k=1}^{m-1}(x_i,kx_i)_n$. (As
    usual, the summation is considered empty and the binomial coefficient
    vanishes when $m<2$.)
\end{enumerate}
\end{proposition}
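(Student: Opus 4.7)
First I would tackle (i) by directly inverting \eqref{Eq:Terg}: the last two coordinates of the product determine $z_2 = y_2 - x_2$ and $z_3 = y_3 - x_3$, and then the first-coordinate equation is solved for $z_1$; the cross-term rewrites as $(x_2 + z_2) x_3 z_3 = (y_3 - x_3) x_3 y_2$, matching the stated form. The key simplification for (ii)--(v) is to decompose the cocycle as $\theta = \mu + \nu$ where $\mu((x_2,x_3),(y_2,y_3)) = (x_2+y_2)x_3 y_3$ and $\nu((x_2,x_3),(y_2,y_3)) = a(x_2,y_2)_n + b(x_3,y_3)_n$. By Lemma \ref{Lm:IndicatorGroup}, $\nu$ is a symmetric group cocycle, so by Lemma \ref{Lm:AddGroupCocycle} the left inner mappings of $Q$ coincide with those of $Q_0 = \terg{\mathbb Z_n}{0}{0}$, and similarly the associator in $Q$ equals that of $Q_0$ (group cocycles contribute nothing to the associator). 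This reduces (ii) and (iv) to plain calculations in $Q_0$.

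Assuming $a = b = 0$, a direct expansion yields $x(yz) - (xy)z = (y_3(x_3 z_2 - x_2 z_3), 0, 0)$. Combined with (i), this gives (ii). For (iv), $x \in \lnuc{Q}$ forces $y_3(x_3 z_2 - x_2 z_3) = 0$ for all $y, z$, hence $x_2 = x_3 = 0$, while $y \in \mnuc{Q}$ forces the same vanishing for all $x, z$, hence $y_3 = 0$. To identify the center, observe that (ii) gives $L_{y,x}(z_1, 0, 0) = (z_1, 0, 0)$, so $\mathbb Z_n \times 0 \times 0 \subseteq Z(Q) \subseteq \lnuc{Q} = \mathbb Z_n \times 0 \times 0$. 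For (iii), commutativity follows from the symmetry of $\theta$, the order is $n^3$ by construction, and (iv) ensures nonassociativity. The A-loop property reduces, by the criterion following \eqref{Eq:A}, to checking $L_{y,x} \in \aut{Q}$: by (ii), $L_{y,x} = M_{\alpha, \beta}$ where $M_{\alpha, \beta}(z) = (z_1 + \alpha z_2 + \beta z_3, z_2, z_3)$, $\alpha = y_3 x_3$, $\beta = -y_3 x_2$; this is clearly bijective, and a one-line verification using \eqref{Eq:Terg} shows it respects the product because the extra $\alpha z_2 + \beta z_3$ is additive in $z$. For (v), $Q/Z(Q) = \mathbb Z_n \times \mathbb Z_n$ by projection onto the last two coordinates; the $M_{\alpha,\beta}$'s compose by $M_{\alpha,\beta} M_{\alpha',\beta'} = M_{\alpha+\alpha', \beta+\beta'}$, hence form a subgroup of $\aut{Q}$ isomorphic to $\mathbb Z_n \times \mathbb Z_n$, and every $(\alpha, \beta)$ is attained by $L_{y,x}$ with $y = (0,0,1)$, $x = (0, -\beta, \alpha)$, so $\inn{Q} = \{L_{u,v} : u, v \in Q\}$.

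Finally, (vi) is a straightforward induction on $m$, invoking power-associativity of commutative A-loops (proved in \cite{BP}) to interpret $x^{m+1} = x^m \cdot x$ unambiguously. The step, via \eqref{Eq:Terg}, produces the increment $x_1$ in the first coordinate, the cross-term $(m+1) x_2 \cdot m x_3 \cdot x_3 = m(m+1) x_2 x_3^2$ (which equals $2[\binom{m+2}{3} - \binom{m+1}{3}] x_2 x_3^2$ by Pascal's identity), and the indicator contribution $a(mx_2, x_2)_n + b(mx_3, x_3)_n = a(x_2, mx_2)_n + b(x_3, mx_3)_n$, which correctly extends each $t_i$ by its $k = m$ term. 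The main obstacle throughout is the bookkeeping; the group-cocycle reduction via Lemma \ref{Lm:AddGroupCocycle} is what keeps (ii)--(v) tractable, since otherwise the indicator contributions would obscure every identity, and for (vi) the only nontrivial point is recognizing Pascal's identity in the binomial coefficients.
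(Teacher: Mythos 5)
Your proposal is correct and follows essentially the same route as the paper: reduce (ii)--(v) to $Q_0=\terg{\mathbb Z_n}{0}{0}$ via the decomposition $\theta=\mu+\nu$ with $\nu$ a group cocycle and Lemmas \ref{Lm:AddGroupCocycle}--\ref{Lm:AddGroupCocycle2}, compute directly in $Q_0$, and induct for (vi) using power-associativity and Pascal's identity. The only differences are cosmetic --- you read both nuclei off a single associator formula and describe $\inn{Q}$ via the maps $M_{\alpha,\beta}$ rather than the paper's explicit homomorphism $\varphi$; just note that fixedness under all $L_{y,x}$ by itself only places $\mathbb Z_n\times 0\times 0$ in $\lnuc{Q}$, and centrality then follows because you have also shown $\mathbb Z_n\times 0\times 0\subseteq\mnuc{Q}$.
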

\begin{proof}
Part (i) follows from the multiplication formula \eqref{Eq:Terg}. Let $Q_0 =
\terg{\mathbb Z_n}{0}{0}$. By Lemma \ref{Lm:AddGroupCocycle}, it suffices to
verify the formula (ii) for $Q_0$ instead of $Q$. Now, calculating in $Q_0$,
\begin{displaymath}
    x(yz) = (x_1+y_1+z_1+(y_2+z_2)y_3z_3 +
    (x_2+y_2+z_2)x_3(y_3+z_3),x_2+y_2+z_2,x_3+y_3+z_3),
\end{displaymath}
so (i) for $Q_0$ implies that $xy\ld x(yz)$ is equal to
\begin{displaymath}
    (z_1+(y_2+z_2)y_3z_3+(x_2+y_2+z_2)x_3(y_3+z_3)-(x_2+y_2)x_3y_3
    -z_3(x_3+y_3)(x_2+y_2+z_2),z_2,z_3),
\end{displaymath}
which simplifies in a straightforward way to (ii).

By Lemma \ref{Lm:AddGroupCocycle2}, to verify that left inner mappings of $Q$
are automorphisms of $Q$, it suffices to check that the left inner mappings of
$Q_0$ are automorphisms of $Q_0$. With $u=(u_1,u_2,u_3)$, $v=(v_1,v_2,v_3)$,
use (ii) to see that
\begin{align*}
    &xy\ld x(yu)\cdot xy\ld x(yv) \\
    &=(u_1+y_3(x_3u_2-x_2u_3),u_2,u_3)(v_1+y_3(x_3v_2-x_2v_3),v_2,v_3)\\
    &=(u_1{+}v_1{+}y_3(x_3(u_2{+}v_2){-}x_2(u_3{+}v_3)){+}(u_2{+}v_2)u_3v_3{+}a(u_2,v_2)_n{+}b(u_3,v_3)_n, u_2{+}v_2, u_3{+}v_3)\\
    &=xy\ld x(y\cdot uv).
\end{align*}
Hence $Q$ is a commutative A-loop of order $n^3$.

To calculate the middle nucleus, we can once again resort to the loop $Q_0$,
since the group cocycle will not play any role in identities that are
consequences of associativity. We have
\begin{align*}
    y\cdot (x_1,x_2,0)z &= y(x_1+z_1,x_2+z_2,z_3)\\
    &=(x_1+y_1+z_1+(x_2+y_2+z_2)y_3z_3,x_2+y_2+z_2,y_3+z_3)\\
    &=(x_1+y_1,x_2+y_2,y_3)z = y(x_1,x_2,0)\cdot z,
\end{align*}
so $\mathbb Z_n\times\mathbb Z_n\times 0\le \mnuc{Q_0}$. On the other hand,
\begin{displaymath}
    (0,0,x_3)(x_1,x_2,0) = (x_1,x_2,x_3),
\end{displaymath}
so to prove that $(x_1,x_2,x_3)\not\in\mnuc{Q_0}$ whenever $x_3\ne 0$, it
suffices to show that $(0,0,x_3)\not\in\mnuc{Q_0}$ whenever $x_3\ne 0$. Now,
\begin{multline*}
    (0,0,1)\cdot (0,0,x_3)(0,1,0) = (0,0,1)(0,1,x_3) = (x_3,1,1+x_3)\\
    \ne (0,1,1+x_3) = (0,0,1+x_3)(0,1,0) = (0,0,1)(0,0,x_3)\cdot (0,1,0)
\end{multline*}
shows just that. Similarly,
\begin{align*}
    (x_1,0,0)\cdot yz &= (x_1,0,0)(y_1+z_1+(y_2+z_2)y_3z_3,y_2+z_2,y_3+z_3)\\
    &=(x_1+y_1+z_1+(y_2+z_2)y_3z_3,y_2+z_2,y_3+z_3)\\
    &=(x_1+y_1,y_2,y_3)z = (x_1,0,0)y\cdot z
\end{align*}
proves that $\mathbb Z_n\times 0\times 0\le \lnuc{Q_0}$, and, for $x_2\ne 0$,
\begin{multline*}
    (x_1,x_2,0)\cdot (0,0,1)(0,0,1) = (x_1,x_2,0)(0,0,2) = (x_1,x_2,2)\\
    \ne (x_1+x_2,x_2,2) = (x_1,x_2,1)(0,0,1) = (x_1,x_2,0)(0,0,1)\cdot (0,0,1)
\end{multline*}
implies that $\lnuc{Q} = \mathbb Z_n\times 0\times 0$ (recall that
$\lnuc{Q}\le\mnuc{Q}$ in any A-loop $Q$).

Consider the mapping $\varphi:Q\to\inn{Q}$ defined by
\begin{displaymath}
    \varphi(x_1,x_2,x_3) = L_{(0,0,1),(0,x_2,x_3)}.
\end{displaymath}
Then
\begin{align*}
    &\varphi(x_1,x_2,x_3)\varphi(y_1,y_2,y_3)(z_1,z_2,z_3)\\
    &=\varphi(x_1,x_2,x_3)(z_1+y_3z_2-y_2z_3,z_2,z_3) = (z_1+y_3z_2-y_2z_3+x_3z_2-x_2z_3,z_2,z_3)\\
    &=\varphi((x_1,x_2,x_3)(y_1,y_2,y_3))(z_1,z_2,z_3)
\end{align*}
and $\varphi$ is a homomorphism. Its kernel consists of all $(x_1,x_2,x_3)\in
Q$ such that $x_3z_2-x_2z_3=0$ for every $z_2$, $z_3\in Q$. Thus $\ker{\varphi}
= \{(x_1,0,0);\;x_1\in \mathbb Z_n\}$. To prove (v), it remains to show that
$\varphi$ is onto $\inn{Q}$. By (ii),
\begin{displaymath}
    L_{(y_1,y_2,y_3),(x_1,x_2,x_3)} = L_{(0,0,y_3),(0,x_2,x_3)} =
    L_{(0,0,1),(0,y_3x_2,y_3x_3)}.
\end{displaymath}
This means that $\im\varphi$ contains a generating subset of $\inn{Q}$, and
hence it is equal to $\inn{Q}$. In fact, purely on the grounds of cardinality,
we have $\inn{Q} = \{L_{u,v};\;u,\,v\in Q\}$.

The identity of (vi) clearly holds when $m=0$. Assume that it holds for some
$m\ge 0$. Let $t_i^m = \sum_{k=1}^m (x_i,kx_i)_n$. By power-associativity, we
have
\begin{align*}
    x^{m+1} &= xx^m = x(mx_1+2\binom{m+1}{3}x_2x_3^2 + at_2^{m-1} + bt_3^{m-1}, mx_2,mx_3)\\
    &= ((m{+}1)x_1{+}2\binom{m+1}{3}x_2x_3^2{+}(m{+}1)x_2mx_3^2{+}at_2^m{+}bt_3^m, (m{+}1)x_2,(m{+}1)x_3),
\end{align*}
Since $2\binom{m+1}{3}+(m+1)m = 2\binom{m+2}{3}$, we are through.
\end{proof}

\begin{lemma}\label{Lm:IsoExp}
Let $p$ be a prime and $a$, $b\in\mathbb Z_p$. Let
$Q=\terg{\mathbb Z_p}{a}{b}$. Then:
\begin{enumerate}
\item[(i)] if $(a,b)=(0,0)$ and $p\ne 3$ then $Q$ has exponent $p$,
\item[(ii)] if $(a,b)\ne(0,0)$ or $p=3$ then $Q$ has exponent $p^2$,
\item[(iii)] if $a=0$ then $\mnuc{Q}\cong\mathbb Z_p\times\mathbb Z_p$,
\item[(iv)] if $a\ne 0$ then $\mnuc{Q}\cong\mathbb Z_{p^2}$.
\end{enumerate}
\end{lemma}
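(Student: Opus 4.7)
The plan is to extract everything from the power formula in Proposition \ref{Pr:Terg}(vi),
\[
x^p = \Bigl(p x_1 + 2\binom{p+1}{3} x_2 x_3^2 + a t_2 + b t_3,\; p x_2,\; p x_3\Bigr),
\]
combined with the overflow sums $t_i = \sum_{k=1}^{p-1} (x_i, k x_i)_p$ reduced modulo $p$. Because $p x_i \equiv 0$, the last two coordinates of $x^p$ are always zero, so $x^p$ lies in $Z(Q) = \mathbb Z_p\times 0\times 0$; then $(x^p)^p$ is just a $p$-fold sum of first coordinates, which vanishes, so $x^{p^2}=1$ for every $x\in Q$ by power-associativity. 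Thus $Q$ has exponent dividing $p^2$ in every case.

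To decide between $p$ and $p^2$, I need two elementary computations. First, $2\binom{p+1}{3} = p(p-1)(p+1)/3$ reduces to $0$ modulo $p$ if and only if $p\ne 3$, since the factor $3$ in the denominator divides $(p-1)(p+1)$ precisely in that case. Second, $t_i \equiv x_i \pmod p$: for $x_i\ne 0$, the map $k\mapsto k x_i \bmod p$ permutes $\mathbb Z_p$, and $(x_i, k x_i)_p = 1$ is equivalent to $k x_i \bmod p \ge p - x_i$, a condition satisfied by exactly $x_i$ values of $k\in\{0,\dots,p-1\}$ and failing at $k=0$. Substituting gives $x^p = (2\binom{p+1}{3} x_2 x_3^2 + a x_2 + b x_3,\, 0,\, 0)$. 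Part (i) is then immediate. For (ii), I exhibit explicit witnesses whose $p$th power is a nontrivial central element: $(0,1,0)$ if $a\ne 0$, $(0,0,1)$ if $b\ne 0$, and $(0,1,1)$ if $(a,b)=(0,0)$ and $p=3$, in which case $x^p = (2\binom{4}{3},0,0) = (2,0,0)$.

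For (iii) and (iv), Proposition \ref{Pr:Terg}(iv) identifies $\mnuc{Q} = \mathbb Z_p\times \mathbb Z_p\times 0$ as a set; restricting \eqref{Eq:Terg} to this subloop gives the group law $(x_1,x_2,0)(y_1,y_2,0) = (x_1 + y_1 + a(x_2,y_2)_p,\, x_2+y_2,\, 0)$, independent of $b$. When $a=0$ this is coordinatewise addition, so $\mnuc{Q}\cong \mathbb Z_p\times \mathbb Z_p$, settling (iii). When $a\ne 0$, the calculation of the previous paragraph shows $(0,1,0)^p = (a,0,0)\ne (0,0,0)$, so $(0,1,0)$ has order $p^2$ in the group $\mnuc{Q}$ of order $p^2$, forcing $\mnuc{Q}\cong \mathbb Z_{p^2}$. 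The only nontrivial step in the plan is identifying $t_i$ modulo $p$; everything else is bookkeeping built on the power formula and the structure of $Z(Q)$ and $\mnuc{Q}$ from Proposition \ref{Pr:Terg}.
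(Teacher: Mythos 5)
Your proof is correct and follows essentially the same route as the paper: everything is read off from the power formula of Proposition \ref{Pr:Terg}(vi) together with the identification of $Z(Q)$ and $\mnuc{Q}$ in Proposition \ref{Pr:Terg}(iv). The only (harmless) variations are that you bound the exponent by $p^2$ directly via $x^p\in Z(Q)$, where the paper instead cites the order result of \cite{JKV} plus nonassociativity, and that you establish the closed form $t_i=x_i$ for a general element, where the paper only evaluates the specific witnesses $(0,1,0)$ and $(0,0,1)$ and the case $(a,b)=(0,0)$.
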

\begin{proof}
By \cite{JKV}, every element of $Q$ has order a power of $p$, so $Q$ has
exponent $p$, $p^2$ or $p^3$. Since $Q$ is nonassociative by Proposition
\ref{Pr:Terg}, the exponent is either $p$
    or $p^2$.

Assume that $(a,b)=(0,0)$. Then by Proposition \ref{Pr:Terg}(vi),
\begin{displaymath}
    (x_1,x_2,x_3)^p = (2\binom{p+1}{3}x_2x_3^2,0,0).
\end{displaymath}
The integer $2\binom{p+1}{3}$ is divisible by $p$ if and only if $p\ne 3$. This
proves (i).

To show (ii), it remains to prove that $Q$ has exponent $p^2$ if $(a,b)\ne
(0,0)$. Assume that $a\ne 0$, and note that, by Proposition \ref{Pr:Terg}(vi),
\begin{displaymath}
    (0,1,0)^p = (a\sum_{k=1}^{p-1}(1,k)_p,0,0) = (a(1,p-1)_p,0,0) = (a,0,0).
\end{displaymath}
This means that $Q$ does not have exponent $p$, and it also shows, by
Proposition \ref{Pr:Terg}(iv), that $\mnuc{Q}\cong\mathbb Z_{p^2}$. Similarly,
when $b\ne 0$, use
\begin{displaymath}
    (0,0,1)^p = (b\sum_{k=1}^{p-1}(1,k)_p,0,0) = (b,0,0)
\end{displaymath}
to conclude that $Q$ does not have exponent $p$.

Finally, when $a=0$, we have $(x_1,x_2,0)^p=0$ by Proposition
\ref{Pr:Terg}(vi), so $\mnuc{Q}\cong\mathbb Z_p\times\mathbb Z_p$ by
Proposition \ref{Pr:Terg}(iv).
\end{proof}

As usual, denote by $\mathbb Z_n^*$ the set of all invertible elements of
$\mathbb Z_n$.

\begin{lemma}\label{Lm:TergIso1}
Let $n>0$. If $b$, $c\in \mathbb Z_n^*$ then $\terg{\mathbb Z_n}{0}{b} \cong
\terg{\mathbb Z_n}{0}{c}$.
\end{lemma}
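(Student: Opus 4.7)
The plan is to exhibit an explicit isomorphism that rescales the ``active'' coordinates while leaving the third coordinate (which carries the overflow indicator) untouched. Since $b \in \mathbb{Z}_n^*$, the element $\lambda = c b^{-1}$ is well-defined and lies in $\mathbb{Z}_n^*$. I would consider the map
\[
    \varphi \colon \terg{\mathbb Z_n}{0}{b} \to \terg{\mathbb Z_n}{0}{c}, \qquad
    \varphi(x_1,x_2,x_3) = (\lambda x_1,\, \lambda x_2,\, x_3).
\]
Because multiplication by $\lambda$ is a bijection of $\mathbb{Z}_n$, $\varphi$ is clearly a bijection of the underlying sets and sends the neutral element to the neutral element.

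The verification that $\varphi$ preserves multiplication is then immediate from \eqref{Eq:Terg}. The second and third coordinates cause no trouble, since $\lambda$ distributes over sums in the second coordinate and the third coordinate is left alone. In the first coordinate there are two nontrivial contributions to track. The trilinear term $(x_2 + y_2)x_3 y_3$ picks up exactly one factor of $\lambda$ on each side of the homomorphism equation: on the $\varphi(x)\varphi(y)$ side, from the rescaling of $x_2$ and $y_2$ by $\lambda$ before the product is formed in $\terg{\mathbb Z_n}{0}{c}$; on the $\varphi(xy)$ side, from applying $\varphi$ (hence multiplying by $\lambda$) after the product has been formed in $\terg{\mathbb Z_n}{0}{b}$. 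The overflow term $b(x_3,y_3)_n$ contributed by $\terg{\mathbb Z_n}{0}{b}$ is scaled by $\lambda$ under $\varphi$, producing $\lambda b(x_3,y_3)_n = c(x_3,y_3)_n$, which matches the overflow term supplied by $\terg{\mathbb Z_n}{0}{c}$ on the other side precisely because of the definition $\lambda = cb^{-1}$.

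The only conceptual point to flag is why we insist on leaving $x_3$ fixed. Had we tried to rescale the third coordinate by a unit $\gamma$, we would have needed $(\gamma x_3,\gamma y_3)_n = \gamma(x_3,y_3)_n$ (or something similarly linear), but the overflow cocycle is \emph{not} compatible with scaling of its inputs in $\mathbb{Z}_n$. Confining the rescaling to the first two coordinates via a \emph{single} factor $\lambda$ — a choice forced by the bilinearity of the term $(x_2+y_2)x_3 y_3$ in the first/second coordinates — lets us absorb $cb^{-1}$ directly into the overflow coefficient without ever disturbing the cocycle $(x_3,y_3)_n$. There is no real calculational obstacle; the only content is the recognition that this one-parameter scaling already suffices.
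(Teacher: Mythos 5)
Your proposal is correct and uses exactly the isomorphism in the paper's proof, namely $\varphi(x_1,x_2,x_3)=\bigl((c/b)x_1,(c/b)x_2,x_3\bigr)$, with the same verification that the scalar $cb^{-1}$ passes through the linear and trilinear terms and converts $b(x_3,y_3)_n$ into $c(x_3,y_3)_n$. Nothing further is needed.
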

\begin{proof}
Define $\varphi:\terg{\mathbb Z_n}{0}{b}\to\terg{\mathbb Z_n}{0}{c}$ by
$(x_1,x_2,x_3)\mapsto ((c/b)x_1,(c/b)x_2,x_3)$, and note that $\varphi$ is a
bijection since $b$, $c$ are invertible.

Denote by $\cdot$ the multiplication in $\terg{\mathbb Z_n}{0}{b}$ and by $*$
the multiplication in $\terg{\mathbb Z_n}{0}{c}$. Then
\begin{align*}
    &\varphi((x_1,x_2,x_3)\cdot(y_1,y_2,y_3)) = \varphi((x_1+y_1+(x_2+y_2)x_3y_3+b(x_3,y_3)_n,x_2+y_2,x_3+y_3))\\
    &=(\frac{c}{b}(x_1+y_1+(x_2+y_2)x_3y_3+b(x_3,y_3)_n),\frac{c}{b}(x_2+y_2),x_3+y_3)\\
    &=(\frac{c}{b}x_1,\frac{c}{b}x_2,x_3)*(\frac{c}{b}y_1,\frac{c}{b}y_2,y_3)
    = \varphi((x_1,x_2,x_3))*\varphi((y_1,y_2,y_3)).
\end{align*}
\end{proof}

Let $p$ be an odd prime. Recall that $a\in\mathbb Z_p^*$ is a \emph{quadratic
residue modulo $p$} if there is $x\in\mathbb Z_p^*$ such that $x^2\equiv a\pmod
p$. Else $a$ is a \emph{quadratic nonresidue modulo $p$}. Also recall that
$ab^{-1}$ is a quadratic residue if and only if either both $a$, $b$ are
quadratic residues or both $a$, $b$ are quadratic nonresidues.

\begin{lemma}\label{Lm:IsoQuadraticResidue}
Let $p$ be an odd prime and $a_1$, $a_2\in\mathbb Z_p^*$. If $a_1$, $a_2$ are
either both quadratic residues or both quadratic nonresidues then
$\terg{\mathbb Z_p}{a_1}{0}\cong\terg{\mathbb Z_p}{a_2}{0}$.
\end{lemma}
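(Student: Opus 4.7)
The plan is to exhibit an explicit isomorphism via linear scaling of the coordinates. By the remark preceding the lemma, the hypothesis on $a_1$, $a_2$ is equivalent to $a_2 a_1^{-1}$ being a quadratic residue modulo $p$, so I can pick $s \in \mathbb{Z}_p^*$ satisfying $s^2 = a_2 a_1^{-1}$, or equivalently $s^2 a_1 = a_2$ in $\mathbb{Z}_p$.

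With such $s$ fixed, I define $\varphi : \terg{\mathbb{Z}_p}{a_1}{0} \to \terg{\mathbb{Z}_p}{a_2}{0}$ by $\varphi(x_1, x_2, x_3) = (s^2 x_1,\, x_2,\, s x_3)$. This is manifestly a bijection of the underlying sets since $s \in \mathbb{Z}_p^*$. To verify that $\varphi$ is a homomorphism I expand both $\varphi(xy)$ and $\varphi(x)\varphi(y)$ using the multiplication formula \eqref{Eq:Terg} with $b = 0$. The second and third coordinates agree trivially, because addition there is componentwise. In the first coordinate, one side gives $s^2\bigl[x_1 + y_1 + (x_2+y_2)x_3 y_3 + a_1(x_2, y_2)_p\bigr]$, while the other expands to $s^2 x_1 + s^2 y_1 + (x_2+y_2)(sx_3)(sy_3) + a_2(x_2, y_2)_p = s^2 x_1 + s^2 y_1 + s^2(x_2+y_2)x_3 y_3 + a_2(x_2, y_2)_p$. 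These two expressions coincide precisely when $s^2 a_1 = a_2$, which is exactly our choice of $s$.

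There is no serious difficulty here, but it is worth noting why the middle coordinate is left fixed rather than also being rescaled. A rescaling $x_2 \mapsto \beta x_2$ with $\beta \ne 1$ would distort the overflow cocycle, because $(\beta x_2, \beta y_2)_p$ is not equal to $\beta(x_2, y_2)_p$ in general but only cohomologous to it; incorporating such a $\beta$ would force a compensating nonlinear term in $x_1$. The present choice avoids this entirely: all of the needed flexibility is supplied by scaling $x_1$ and $x_3$, and the quadratic residue condition on $a_2 a_1^{-1}$ is precisely what allows the scalar $s^2$ in front of $(x_2+y_2)x_3 y_3$ to simultaneously match the coefficient of the overflow term.
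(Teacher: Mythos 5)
Your proof is correct and is essentially identical to the paper's: the authors also choose $u$ with $a_2=a_1u^2$ and use the isomorphism $(x_1,x_2,x_3)\mapsto(u^2x_1,x_2,ux_3)$, verified by the same coordinate computation. Nothing further is needed.
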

\begin{proof}
Since $a_1a_2^{-1}$ is a quadratic residue, there is $u$ such that
$a_2=a_1u^2$. Define $\varphi:\terg{\mathbb Z_p}{a_1}{0}\to\terg{\mathbb
Z_p}{a_2}{0}$ by $(x_1,x_2,x_3)\mapsto (u^2x_1,x_2,ux_3)$. Then $\varphi$ is a
bijection. Denote by $\cdot$ the multiplication in $\terg{\mathbb Z_p}{a_1}{0}$
and by $*$ the multiplication in $\terg{\mathbb Z_p}{a_2}{0}$. Then
\begin{align*}
    &\varphi((x_1,x_2,x_3)\cdot (y_1,y_2,y_3)) = \varphi((x_1+y_1+(x_2+y_2)x_3y_3+a_1(x_2,y_2)_p,x_2+y_2,x_3+y_3))\\
    &= (u^2(x_1+y_1+(x_2+y_2)x_3y_3+a_1(x_2,y_2)_p),x_2+y_2,u(x_3+y_3))\\
    &= (u^2x_1+u^2y_1+(x_2+y_2)ux_3uy_3+a_2(x_2,y_2)_p, x_2+y_2, u(x_3+y_3))\\
    &= (u^2x_1,x_2,ux_3)*(u^2y_1,y_2,uy_3) = \varphi((x_1,x_2,x_3))*\varphi((y_1,y_2,y_3)).
\end{align*}
\end{proof}

\begin{lemma}\label{Lm:Isof}
For a prime $p$, let $Q_1=\terg{\mathbb Z_p}{a}{b}=(Q_1,\cdot)$,
$Q_2=\terg{\mathbb Z_p}{a}{c}=(Q_2,*)$ and let $f:Q_1\to Q_2$ be an isomorphism
that pointwise fixes the middle nucleus of $Q_1$ $($i.e., $f$ is identical on
$\mathbb Z_p\times\mathbb Z_p\times 0)$. Then there are $A$, $B\in\mathbb Z_p$
and $C\in\mathbb Z_p^*$ such that
\begin{equation}\label{Eq:Isof}
    f(x_1,x_2,x_3) = (x_1,x_2,0)*(A,B,C)^{x_3}
\end{equation}
for every $(x_1,x_2,x_3)\in Q_1$.

In addition, every mapping $f:Q_1\to Q_2$ defined by \eqref{Eq:Isof} with $A$,
$B\in\mathbb Z_p$ and $C\in \mathbb Z_p^*$ is a bijection that pointwise fixes
$\mnuc{Q_1}$.
\end{lemma}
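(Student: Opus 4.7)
The plan is to factor an arbitrary element of $Q_1$ through the middle nucleus and the one-parameter subset $\{(0,0,x_3);\;x_3\in\mathbb Z_p\}$, and exploit that $f$ is a homomorphism that is the identity on $\mnuc{Q_1}$. A direct application of \eqref{Eq:Terg} yields $(x_1,x_2,0)\cdot(0,0,x_3)=(x_1,x_2,x_3)$, because the cross term $(x_2+0)\cdot 0\cdot x_3$ and both overflow contributions $a(x_2,0)_p$ and $b(0,x_3)_p$ vanish. Consequently,
\[
    f(x_1,x_2,x_3)=f((x_1,x_2,0)\cdot(0,0,x_3))=(x_1,x_2,0)*f((0,0,x_3)),
\]
so the task reduces to identifying $f$ on the set $\{(0,0,x_3);\;x_3\in\mathbb Z_p\}$.

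Next I would show that in $Q_1$ one has $(0,0,1)^{x_3}=(0,0,x_3)$ for every $0\le x_3<p$. Applying Proposition~\ref{Pr:Terg}(vi) with $x=(0,0,1)$ and $m=x_3$, the summands $mx_1$, $2\binom{m+1}{3}x_2x_3^2$ and $at_2$ all vanish, while $t_3=\sum_{k=1}^{x_3-1}(1,k)_p=0$ since $(1,k)_p=0$ whenever $k<p-1$. Power-associativity of commutative A-loops (from \cite{JKV}) forces $f$ to respect powers, so setting $(A,B,C)=f((0,0,1))\in Q_2$ gives $f((0,0,x_3))=(A,B,C)^{x_3}$, which is exactly \eqref{Eq:Isof}. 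To see that $C\ne 0$, suppose otherwise; then $(A,B,0)\in\mnuc{Q_2}$, so the pointwise-fixing hypothesis yields $f((A,B,0))=(A,B,0)=f((0,0,1))$, and injectivity forces $(0,0,1)=(A,B,0)$, contradicting $(0,0,1)\notin\mnuc{Q_1}$.

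For the converse, substituting $x_3=0$ into \eqref{Eq:Isof} gives $f(x_1,x_2,0)=(x_1,x_2,0)*(0,0,0)=(x_1,x_2,0)$, so $f$ restricts to the identity on $\mnuc{Q_1}$. For bijectivity I would again invoke Proposition~\ref{Pr:Terg}(vi), this time in $Q_2$, to record that the second and third coordinates of $(A,B,C)^{x_3}$ are $x_3B$ and $x_3C$ respectively. A single application of \eqref{Eq:Terg} then shows that the third coordinate of $f(x_1,x_2,x_3)$ equals $x_3C$, the second equals $x_2+x_3B$, and the first is a function of $x_1,x_2,x_3$ depending affinely on $x_1$. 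Since $C\in\mathbb Z_p^*$, the assignment $x_3\mapsto x_3C$ is a bijection of $\mathbb Z_p$; hence $x_3$ is recovered from the third coordinate, then $x_2$ from the second, and finally $x_1$ from the first, proving that $f$ is a bijection of $Q_1$ onto $Q_2$.

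The main technical hurdle is verifying $(0,0,1)^{x_3}=(0,0,x_3)$: one must track that only the $b$-overflow could a priori contribute and that it vanishes precisely because the partial sums $1,2,\dots,x_3-1$ in $\mathbb Z_p$ do not wrap around when $x_3<p$. Everything else is careful bookkeeping built on top of the factorization $(x_1,x_2,x_3)=(x_1,x_2,0)\cdot(0,0,1)^{x_3}$, which is the conceptual heart of the lemma.
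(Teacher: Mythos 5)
Your proof is correct and follows essentially the same route as the paper: factor $(x_1,x_2,x_3)=(x_1,x_2,0)\cdot(0,0,1)^{x_3}$, use Proposition \ref{Pr:Terg}(vi) to identify $(0,0,1)^{x_3}=(0,0,x_3)$, apply the homomorphism together with the pointwise-fixing hypothesis, and recover the coordinates successively from the third one down for bijectivity. Your explicit injectivity argument for $C\ne 0$ is a slightly more detailed justification than the paper's one-line remark, but it is the same idea.
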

\begin{proof}
Let $f:Q_1\to Q_2$ be an isomorphism that pointwise fixes $\mnuc{Q_1}$. As
$Q_1/\mnuc{Q_1}$ is a cyclic group, $f$ is determined by the image of any
element in $Q_1\setminus \mnuc{Q_1}$. Let $f(0,0,1)=(A,B,C)$. We must have
$C\ne 0$, else $f$ is not a bijection. Since
$(x_1,x_2,x_3)=(x_1,x_2,0)(0,0,x_3)$ and $(0,0,x_3) = (0,0,1)^{x_3}$ by
Proposition \ref{Pr:Terg}(vi), we have
\begin{displaymath}
    f(x_1,x_2,x_3) = f(x_1,x_2,0)*f(0,0,1)^{x_3} = (x_1,x_2,0)*(A,B,C)^{x_3}.
\end{displaymath}

Conversely, define $f:Q_1\to Q_2$ by \eqref{Eq:Isof}, where $C\ne 0$. Then $f$
obviously pointwise fixes $\mnuc{Q_1}$. To show that $f$ is a bijection, assume
that $f(x_1,x_2,x_3) = f(y_1,y_2,y_3)$. Since the last coordinate of
$(x_1,x_2,0)*(A,B,C)^{x_3}$ is $Cx_3$, we conclude that $x_3=y_3$. The second
coordinate of $(x_1,x_2,0)*(A,B,C)^{x_3}$ is $x_2+Bx_3$, and we conclude that
$x_2=y_2$. Then $x_1=y_1$ follows from the multiplication formula for $Q_2$ and
from Proposition \ref{Pr:Terg}(vi).
\end{proof}

\begin{lemma}\label{Lm:IsoSameA}
Let $p\ne 3$ be a prime and assume that $a$, $b$, $c\in\mathbb
Z_p$ are such that $a+c\equiv b\pmod p$. Let $Q_1=\terg{\mathbb Z_p}{a}{b} =
(Q_1,\cdot)$ and $Q_2=\terg{\mathbb Z_p}{a}{c} = (Q_2,*)$. Then $f:Q_1\to Q_2$
defined by \eqref{Eq:Isof} with $(A,B,C) = (0,1,1)$ is an isomorphism.
\end{lemma}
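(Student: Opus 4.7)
By Lemma \ref{Lm:Isof}, $f$ is automatically a bijection fixing $\mnuc{Q_1}$ pointwise, so only the homomorphism property $f(u \cdot v) = f(u) * f(v)$ requires verification. The plan is to derive a closed-form expression for $f$ and then check the identity by direct computation modulo $p$.

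First, I would apply Proposition \ref{Pr:Terg}(vi) in $Q_2$: for $x_3 \in \{0,1,\ldots,p-1\}$ the overflow sums $t_2$, $t_3$ of the element $(0,1,1)$ vanish, so $(0,1,1)^{x_3} = (2\binom{x_3+1}{3}, x_3, x_3)$. Substituting into $f(x_1,x_2,x_3) = (x_1,x_2,0)*(0,1,1)^{x_3}$ and expanding via the product \eqref{Eq:Terg} in $Q_2$ gives the closed form
\[
    f(x_1,x_2,x_3) = \left(x_1 + 2\binom{x_3+1}{3} + a(x_2,x_3)_p,\; x_2 + x_3,\; x_3\right),
\]
with all arithmetic modulo $p$ in the first two coordinates.

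The second and third coordinates of $f(u \cdot v)$ and $f(u) * f(v)$ then agree immediately, so the task reduces to showing the first-coordinate difference vanishes modulo $p$. That difference splits naturally into three pieces: a \emph{binomial piece} comparing the three values of $2\binom{\cdot+1}{3}$ evaluated at $x_3$, $y_3$, and the mod-$p$ reduction of $x_3+y_3$; a \emph{product piece} coming from distributing $(x_2+x_3)+(y_2+y_3)$ against $x_3 y_3$ in the expansion of $f(u)*f(v)$; and an \emph{overflow piece} consisting of an $a$-weighted alternating sum of six overflow indicators together with a residual $(c-b)(x_3,y_3)_p$. Using the identity $2\binom{n+1}{3} = (n^3-n)/3$, the binomial piece collapses modulo $p$ to $-(x_3+y_3)x_3 y_3$ and cancels the product piece. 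Using $a + c \equiv b \pmod p$, the residual $(c-b)(x_3,y_3)_p$ becomes $-a(x_3,y_3)_p$, which merges into the overflow piece; the resulting $a$-weighted bracket of six indicators then vanishes because the elementary identity $p\cdot(x,y)_p = x + y - (x \oplus y)$ makes both triples of indicators telescope to the same integer $x_2 + y_2 + x_3 + y_3 - (x_2 \oplus y_2 \oplus x_3 \oplus y_3)$.

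The main technical obstacle is the bookkeeping between integer and mod-$p$ arithmetic in the binomial piece when $x_3 + y_3 \geq p$: one must combine the congruence $(x_3+y_3-p)^3 - (x_3+y_3-p) \equiv (x_3+y_3)^3 - (x_3+y_3) \pmod p$ with the invertibility of $3$ modulo $p$, and this is precisely where the hypothesis $p \ne 3$ is consumed. The hypothesis $a+c \equiv b \pmod p$ enters in exactly one place, to cancel the coefficient mismatch between the $b$ of $Q_1$ and the $c$ of $Q_2$ in the third-coordinate overflow.
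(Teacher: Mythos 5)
Your proposal is correct and follows essentially the same route as the paper: the paper writes the closed form as $f(x_1,x_2,x_3)=(x_1+x_3'+a(x_2,x_3)_p,\,x_2+x_3,\,x_3)$ with $x'=(x-1)x(x+1)/3=2\binom{x+1}{3}$, cancels the cubic terms against the extra $(x_3+y_3)x_3y_3$ exactly as in your binomial/product pieces, and disposes of the six overflow indicators via the same telescoping identity derived from \eqref{Eq:Indicator}, with $a+c\equiv b$ absorbing the residual $(x_3,y_3)_p$ term. Your accounting of where $p\ne 3$ and $a+c\equiv b$ are consumed matches the paper's argument.
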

\begin{proof}
For $x\in \mathbb Z_p$, let $x' = (x-1)x(x+1)/3$. By Lemma \ref{Lm:Isof}, $f$
is a bijection onto $Q_2$ that pointwise fixes $\mnuc{Q_1}$. Upon expanding the
formula \eqref{Eq:Isof}, we see that
\begin{displaymath}
    f(x_1,x_2,x_3) = (x_1+x_3'+ a(x_2,x_3)_p, x_2+x_3, x_3),
\end{displaymath}
since the expression $\sum_{k=1}^{x_3-1}(1,k)_p$ vanishes for every $x_3<p$.
Let
\begin{displaymath}
    (u_1,u_2,u_3) = f(x_1,x_2,x_3)*f(y_1,y_2,y_3)
\end{displaymath}
and
\begin{displaymath}
     (v_1,v_2,v_3) = f((x_1,x_2,x_3)\cdot (y_1,y_2,y_3)).
\end{displaymath}
A quick calculation then shows that
\begin{displaymath}
    (u_2,u_3)=(v_2,v_3)=(x_2+x_3+y_2+y_3,x_3+y_3),
\end{displaymath}
$u_1$ is equal to
\begin{displaymath}
   x_1{+}x_3'{+}a(x_2,x_3)_p{+}y_1{+}y_3'{+}a(y_2,y_3)_p{+}(x_2{+}x_3{+}y_2{+}y_3)x_3y_3{+}a(x_2{+}x_3,y_2{+}y_3)_p{+}c(x_3,y_3)_p,
\end{displaymath}
while $v_1$ is equal to
\begin{displaymath}
   x_1{+}y_1{+}(x_2{+}y_2)x_3y_3{+}a(x_2,y_2)_p{+}b(x_3,y_3)_p{+}(x_3{+}y_3)'{+}a(x_2{+}y_2,x_3{+}y_3)_p.
\end{displaymath}
Now, $x_3'+y_3' = (x_2+y_2)x_3y_3 + (x_3+y_3)'$. Using \eqref{Eq:Indicator}, it
is easy to see that
\begin{displaymath}
    (x_2,x_3)_p + (y_2,y_3)_p+(x_2+x_3,y_2+y_3)_p = (x_2,y_2)_p +
    (x_2+y_2,x_3+y_3)_p + (x_3,y_3)_p.
\end{displaymath}
Hence we are done by $a+c\equiv b\pmod p$.
\end{proof}

\begin{corollary}\label{Cr:IsoExp}
Let $p\ne 3$ be a prime, $a\in\mathbb Z_p^*$ and $b$, $c\in\mathbb Z_p$. Then
$\terg{\mathbb Z_p}{a}{b}$ is isomorphic to $\terg{\mathbb Z_p}{a}{c}$.
\end{corollary}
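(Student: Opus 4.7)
The plan is to derive the corollary as an immediate iterated application of Lemma~\ref{Lm:IsoSameA}. That lemma says: if $a + c \equiv b \pmod p$, then $\terg{\mathbb Z_p}{a}{b} \cong \terg{\mathbb Z_p}{a}{c}$. Rewriting the hypothesis as $c = b - a$, we read the lemma as the single-step relation
\begin{displaymath}
    \terg{\mathbb Z_p}{a}{b}\ \cong\ \terg{\mathbb Z_p}{a}{b-a}
\end{displaymath}
valid for every $b\in\mathbb Z_p$ (and every fixed $a\in\mathbb Z_p^*$, though invertibility is not used in the lemma itself).

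Next, I would iterate: composing $k$ isomorphisms of the above type gives $\terg{\mathbb Z_p}{a}{b} \cong \terg{\mathbb Z_p}{a}{b-ka}$ for every nonnegative integer $k$. This is where the assumption $a \in \mathbb Z_p^*$ enters crucially: since $a$ is invertible in $\mathbb Z_p$, the map $k \mapsto b - ka$ is a bijection of $\mathbb Z_p$ onto itself. Therefore, given any target $c \in \mathbb Z_p$, I can solve $b - ka \equiv c \pmod p$ by taking $k \equiv (b - c)a^{-1} \pmod p$, and reading $k$ as an integer in $\{0,1,\dots,p-1\}$ gives a finite chain of isomorphisms
\begin{displaymath}
    \terg{\mathbb Z_p}{a}{b}\ \cong\ \terg{\mathbb Z_p}{a}{b-a}\ \cong\ \cdots\ \cong\ \terg{\mathbb Z_p}{a}{b-ka}\ =\ \terg{\mathbb Z_p}{a}{c},
\end{displaymath}
which proves the corollary.

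There is essentially no obstacle here; the corollary is a bookkeeping consequence of Lemma~\ref{Lm:IsoSameA} together with invertibility of $a$. The only thing worth noting is why $a \neq 0$ is needed: without it, the shift $b \mapsto b - a$ is the identity and one cannot move $b$ to an arbitrary $c$. The hypothesis $p \ne 3$ is inherited from Lemma~\ref{Lm:IsoSameA} and needs no separate treatment.
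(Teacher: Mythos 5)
Your proof is correct and follows exactly the paper's approach: the paper likewise iterates Lemma~\ref{Lm:IsoSameA} to obtain the chain $\terg{\mathbb Z_p}{a}{0}\cong\terg{\mathbb Z_p}{a}{a}\cong\terg{\mathbb Z_p}{a}{2a}\cong\cdots$, with invertibility of $a$ guaranteeing that the multiples of $a$ exhaust $\mathbb Z_p$. Your write-up just makes the role of $a\in\mathbb Z_p^*$ more explicit than the paper's one-line proof.
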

\begin{proof}
By Lemma \ref{Lm:IsoSameA} we have $\terg{\mathbb Z_p}{a}{0}\cong \terg{\mathbb
Z_p}{a}{a}\cong\terg{\mathbb Z_p}{a}{2a}$, and so on.
\end{proof}

\subsection{Ring construction}

Note that for $a=b=0$, the construction \eqref{Eq:Terg} makes sense over any
commutative ring $R$, not just over $\mathbb Z_n$. We can summarize the most
important features of the construction as follows:

\begin{proposition} Let $R\ne 0$ be a commutative ring. Let $Q=\ter{R}$ be
defined on $R\times R\times R$ by
\begin{displaymath}
    (x_1,x_2,x_3)(y_1,y_2,y_3) = (x_1+y_1+(y_2+x_2)x_3y_3,x_2+y_2,x_3+y_3).
\end{displaymath}
Then $Q$ is a commutative A-loop satisfying $\lnuc{Q}=Z(Q)=R\times 0\times 0$
and $\mnuc{Q} = R\times R\times 0$.
\end{proposition}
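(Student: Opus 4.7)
The plan is to mirror the proof of Proposition~\ref{Pr:Terg}, observing that once we set $a = b = 0$ the overflow-indicator terms disappear entirely and every surviving verification becomes a polynomial identity in the ring operations. All such identities hold verbatim over any commutative unital ring, so the ring structure of $R$ supplies precisely the addition, subtraction, and multiplication needed to reproduce the $\mathbb{Z}_n$-computations over $R$.

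First, commutativity is immediate from the symmetry of the cocycle $\theta((x_2,x_3),(y_2,y_3)) = (x_2+y_2)x_3 y_3$ under the interchange of its two arguments. The neutral element is $(0,0,0)$, and, as in Proposition~\ref{Pr:Terg}(i), left division is given by
\[
x \ld y = (y_1 - x_1 - (y_3 - x_3)x_3 y_2,\ y_2 - x_2,\ y_3 - x_3),
\]
which makes sense in $R$ because $R$ is closed under subtraction; together with commutativity this establishes the loop structure.

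Next, I would derive the left inner mapping formula
\[
L_{y,x}(z) = xy \ld x(yz) = (z_1 + y_3(x_3 z_2 - x_2 z_3),\ z_2,\ z_3)
\]
by the same expansion as in Proposition~\ref{Pr:Terg}(ii), and then verify the A-loop condition $L_{y,x}(u v) = L_{y,x}(u) \cdot L_{y,x}(v)$ by expanding both sides with the multiplication formula and comparing. Both sides yield the triple with first coordinate $u_1 + v_1 + (u_2+v_2) u_3 v_3 + y_3 (x_3(u_2+v_2) - x_2(u_3+v_3))$ and remaining coordinates $u_2 + v_2$, $u_3 + v_3$. Because $Q$ is commutative, checking that every $L_{y,x}$ is an automorphism suffices to conclude that $Q$ is an A-loop.

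Finally, for the nuclei, I would check $R \times 0 \times 0 \subseteq \lnuc{Q}$ and $R \times R \times 0 \subseteq \mnuc{Q}$ by direct associativity computations with the multiplication formula, and exhibit non-associativity witnesses for every other element: if $x_3 \neq 0$, then $(0,0,1)$, $(x_1, x_2, x_3)$, $(0,1,0)$ fail middle associativity in the first coordinate by exactly $x_3$; if $x_3 = 0$ but $x_2 \neq 0$, then $(x_1, x_2, 0)$, $(0,0,1)$, $(0,0,1)$ fail left associativity in the first coordinate by exactly $x_2$. Commutativity of $Q$ then forces $Z(Q) = \lnuc{Q}$. The only bookkeeping subtlety is that these witnesses use $1 \in R$, which is available since $R \neq 0$ is unital; beyond that there is no genuine obstacle, as the work is a direct translation of the proof of Proposition~\ref{Pr:Terg}.
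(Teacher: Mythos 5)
Your proposal is correct and follows essentially the same route as the paper, whose proof of this proposition is simply a pointer to the relevant parts of the proof of Proposition \ref{Pr:Terg}: with $a=b=0$ the overflow terms vanish and every computation there is a polynomial identity valid over any commutative ring. Your explicit non-associativity witnesses (discrepancies $x_3$ and $x_2$ in the first coordinate) are minor rearrangements of the ones used in that proof, so nothing genuinely new is being done.
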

\begin{proof}
See the relevant parts of the proof of Proposition \ref{Pr:Terg}.
\end{proof}

\subsection{Towards the classification of commutative A-loops of order $p^3$}

The results obtained up to this point come close to describing the isomorphism
types of all loops $\terg{\mathbb Z_p}{a}{b}$ for all primes $p\ne 3$.

Fix $p\ne 3$. The loop $\terg{\mathbb Z_p}{0}{0}$ is of exponent $p$ and is not
isomorphic to any other loop $\terg{\mathbb Z_p}{a}{b}$, by Lemma
\ref{Lm:IsoExp}. By Lemma \ref{Lm:IsoExp} and Corollary \ref{Cr:IsoExp}, the
loops $\{\terg{\mathbb Z_p}{0}{b};\;0<b<p\}$ form an isomorphism class. By
Lemmas \ref{Lm:IsoQuadraticResidue} and \ref{Lm:Isof}, each of the two sets
$I_r = \{\terg{\mathbb Z_p}{a}{b};\;a>0$ is a quadratic residue modulo $p$ and
$0\le b<p\}$ and $I_n = \{\terg{\mathbb Z_p}{a}{b};\;a>0$ is a quadratic
nonresidue modulo $p$ and $0\le b<p\}$ consist of pairwise isomorphic loops.

However, we did not manage to establish the following:

\begin{conjecture}\label{Cr:IsoQuadraticResidue}
Let $p>3$ be a prime, let $a_1\in\mathbb Z_p^*$ be a
quadratic residue and $a_2\in\mathbb Z_p^*$ be a quadratic nonresidue. Then
$\terg{\mathbb Z_p}{a_1}{0}$ is not isomorphic to $\terg{\mathbb Z_p}{a_2}{0}$.
\end{conjecture}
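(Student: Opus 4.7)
The plan is to extract two isomorphism invariants from each loop $Q_a := \terg{\mathbb Z_p}{a}{0}$ and to read off the quadratic-residue class of $a$ from their interplay. Identify $Q_a/Z(Q_a)$ with $\mathbb Z_p^2$ via $(x_1,x_2,x_3)\mapsto(x_2,x_3)$, and $Z(Q_a)$ with $\mathbb Z_p$ via the first coordinate. A short direct computation with the multiplication formula from Proposition~\ref{Pr:Terg} shows that the associator $(xy)z\cdot(x(yz))^{-1}\in Z(Q_a)$, when viewed as a function of the $Z(Q_a)$-cosets of $x$, $y$, $z$, is given by the $a$-independent trilinear form
\begin{displaymath}
  A\bigl((x_2,x_3),(y_2,y_3),(z_2,z_3)\bigr) = y_3(x_2z_3 - x_3z_2).
\end{displaymath}
On the other hand, Proposition~\ref{Pr:Terg}(vi) implies that the $p$-th power map descends to a linear form $\pi\colon \mathbb Z_p^2 \to \mathbb Z_p$ with $\pi(x_2,x_3)=a x_2$: here $p\ne 3$ forces $2\binom{p+1}{3}$ to vanish modulo $p$, and one uses the elementary identity $\sum_{k=1}^{p-1}(x_2,kx_2)_p = x_2$ (which is already implicit in the proof of Lemma~\ref{Lm:IsoExp}).

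Now let $f\colon Q_{a_1}\to Q_{a_2}$ be an isomorphism. Since $f$ preserves both the center and the middle nucleus, it induces $\beta\in\mathbb Z_p^*$ on $Z(Q)$ and $\alpha\in GL_2(\mathbb Z_p)$ on $Q/Z(Q)$, with $\alpha$ necessarily preserving the first coordinate axis (which corresponds to $\mnuc{Q_a}/Z(Q_a)$). Hence $\alpha$ is upper triangular: $\alpha(x_2,x_3) = (ux_2+vx_3,\,wx_3)$ with $u,w\in\mathbb Z_p^*$. Because $f$ is a homomorphism and both $A$ and $\pi$ take values in the center, we obtain
\begin{displaymath}
  A(\alpha\xi,\alpha\eta,\alpha\zeta) = \beta A(\xi,\eta,\zeta),
  \qquad
  \pi(\alpha\xi) = \beta \pi(\xi).
\end{displaymath}
Expanding the first identity with the explicit formula for $A$ yields $\beta = uw^2$; expanding the second and matching the coefficient of $x_2$ yields $\beta a_1 = a_2 u$. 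Combining these two relations gives $a_2 = w^2 a_1$, so $a_2/a_1$ must be a nonzero square modulo $p$, which contradicts the hypothesis that $a_1$ is a quadratic residue and $a_2$ is a non-residue.

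The only real obstacle in this approach is identifying the right pair of invariants; thereafter everything reduces to mechanical substitution and comparison of coefficients. Informally, the invariant $A$ constrains the ratio $\beta/u$ to be the square $w^2$, while $\pi$ identifies that same ratio with $a_2/a_1$, forcing the latter to be a square as well; the compatibility of these two constraints is exactly the obstruction that the conjecture is asserting.
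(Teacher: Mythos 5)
This statement is not proved in the paper at all: the authors explicitly say they ``did not manage to establish'' it, verified it only by computer for $p=5,7$, and speculate that the multiplication group might be the distinguishing invariant. Your argument takes a completely different (and, as far as I can verify, correct) route, and if the routine computations are written out it would settle Conjecture~\ref{Cr:IsoQuadraticResidue}. The key facts all check: the associator of $x,y,z$ in $\terg{\mathbb Z_p}{a}{0}$ is the central element with first coordinate $\pm y_3(x_2z_3-x_3z_2)$, independent of $a$ because the overflow part of the cocycle is a group cocycle (Lemma~\ref{Lm:IndicatorGroup}); and for $p>3$ Proposition~\ref{Pr:Terg}(vi) together with $\sum_{k=1}^{p-1}(x_2,kx_2)_p=x_2$ (for $x_2\ne 0$ the residues $kx_2$ run over all of $1,\dots,p-1$, of which exactly $x_2$ satisfy $m\ge p-x_2$) gives $x^p=(ax_2,0,0)$. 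Any isomorphism preserves $Z$ and $\mnuc{Q}$, hence induces $\beta\in\mathbb Z_p^*$ on the center and a triangular $\alpha(x_2,x_3)=(ux_2+vx_3,wx_3)$ on $Q/Z$; since $x_2z_3-x_3z_2$ is the determinant of the matrix with columns $\xi,\zeta$, the associator form picks up the factor $w\cdot\det\alpha=uw^2$, forcing $\beta=uw^2$, while the $p$-th power form forces $\beta a_1=a_2u$ (and $v=0$), whence $a_2=w^2a_1$ is a square multiple of $a_1$. I also checked that your two transformation laws are consistent with the explicit isomorphisms of Lemmas~\ref{Lm:TergIso1}, \ref{Lm:IsoQuadraticResidue} and \ref{Lm:IsoSameA}, and that, combined with Lemma~\ref{Lm:IsoQuadraticResidue}, your conclusion reproduces exactly the two isomorphism classes the authors found computationally for $p=5,7$. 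The only things to add before this could stand as a proof are the two ``short direct computations'' in full: the verification that the associator has the stated form (the $a$-terms cancel by Lemma~\ref{Lm:IndicatorGroup}) and the coefficient comparison; neither hides a difficulty.
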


We have verified the conjecture computationally with the GAP \cite{GAP} package
LOOPS \cite{LOOPS} for $p=5$, $7$. It appears that one of the distinguishing
isomorphism invariants is the multiplication group $\mlt{Q} = \langle
L_x,\,R_x;\;x\in Q\rangle$.

The loops $\terg{\mathbb Z_p}{a}{b}$ behave differently for $p=3$ due to the
fact that $3$ is the only prime $p$ for which $p$ does not divide
$2\binom{p+1}{3}$. Denote by $f_{(A,B,C)}$ the bijection defined by
\eqref{Eq:Isof}. It can be verified by computer that $f_{(0,1,1)}$ is an
exceptional isomorphism $\terg{\mathbb Z_3}{0}{0}\to\terg{\mathbb Z_3}{0}{1}$,
$f_{(0,0,2)}$ is an isomorphism $\terg{\mathbb Z_3}{1}{1}\to\terg{\mathbb
Z_3}{1}{2}$, $f_{(0,1,2)}$ is an isomorphism $\terg{\mathbb
Z_3}{2}{0}\to\terg{\mathbb Z_3}{2}{1}$ and $f_{(0,1,1)}$ is an isomorphism
$\terg{\mathbb Z_3}{2}{0}\to\terg{\mathbb Z_3}{2}{2}$. The loops $\terg{\mathbb
Z_3}{0}{0}$, $\terg{\mathbb Z_3}{1}{0}$, $\terg{\mathbb Z_3}{1}{1}$ and
$\terg{\mathbb Z_3}{2}{0}$ contain precisely $12$, $6$, $24$ and $18$ elements
of order $9$, respectively, so no two of them are isomorphic.

\setlength{\unitlength}{1.0mm}
\begin{figure}\input{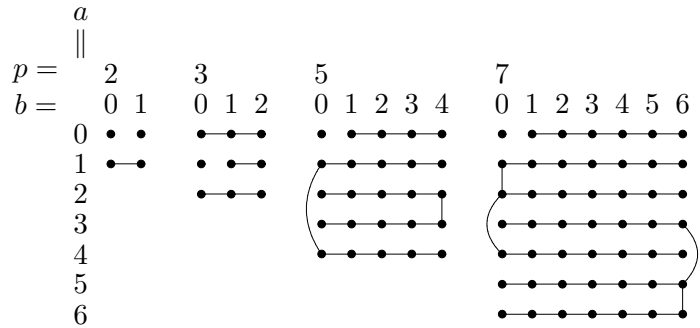}
\caption{Isomorphism classes of loops $\terg{\mathbb Z_p}{a}{b}$ for $p\in\{2,3,5,7\}$.}
\label{Fg:IsoTypes}
\end{figure}

Altogether, Figure \ref{Fg:IsoTypes} depicts the isomorphism classes of loops
$\terg{\mathbb Z_p}{a}{b}$ as connected components, for $p\in\{2,3,5,7\}$ and
$a$, $b\in\mathbb Z_p$. Moreover, if Conjecture \ref{Cr:IsoQuadraticResidue} is
true, the pattern established by $p=2$, $5$ and $7$ continues for all primes
$p>7$.

It is reasonable to ask whether, for an odd prime $p$, there are nonassociative
commutative A-loops of order $p^3$ not of the form $\terg{\mathbb Z_p}{a}{b}$.

Using a linear-algebraic approach to cocycles (see Subsection \ref{Ss:32}), we
managed to classify all nonassociative commutative A-loops of order $p^3$ with
nontrivial center, for $p\in\{2,3,5,7\}$. It turns out that all such loops are
of the type $\terg{\mathbb Z_p}{a}{b}$. In particular, $p=3$ is the only prime
for which there is no nonassociative commutative A-loop of order $p^3$ and
exponent $p$.

\begin{problem}
Let $p$ be an odd prime and $Q$ a nonassociative commutative A-loop of order
$p^3$. Is $Q$ isomorphic to $\terg{\mathbb Z_p}{a}{b}$ for some $a$,
$b\in\mathbb Z_p$?
\end{problem}

\section{Enumeration}\label{Sc:Enumeration}

We believe that future work will benefit from an enumeration of small
commutative A-loops. The results are summarized in Table \ref{Tb:Enum}, which
lists all orders $n\le 32$ for which there exists a nonassociative commutative
A-loop.

\begin{table}
\caption{Commutative A-loops up to isomorphism (up to
isotopism).}\label{Tb:Enum}
\begin{displaymath}
\begin{array}{r||c|c|c|c|c|c|c|c}
    \begin{array}{r}
        n
    \end{array}
        &8&15&16&21&24&27&30&32\\
    \hline\hline
    \begin{array}{r}
        \text{commutative groups}
    \end{array}
        &3&1&5&1&3&3&1&7\\
    \hline
    \begin{array}{r}
        \text{commutative nonassociative A-loops}
    \end{array}
        &4(3)&1&46(38)&1&4(3)&4&1&?\\
    \hline
    \begin{array}{r}
        \text{commutative nonassociative A-loops}\\
        \text{with nontrivial center}
    \end{array}
        &3(2)&0&44(37)&0&4(3)&4&1&?\\
    \hline
    \begin{array}{r}
        \text{commutative nonassociative A-loops}\\
        \text{of exponent $p$}
    \end{array}
        &2&-&12(11)&-&-&0&-&?\\
    \hline
    \begin{array}{r}
        \text{commutative nonassociative A-loops}\\
        \text{of exponent $p$}\\
        \text{with nontrivial center}
    \end{array}
        &1&-&10&-&-&0&-&211(210)
\end{array}
\end{displaymath}
\end{table}

If there is only one number in a cell of the table, it is both the number of
isomorphism classes and the number of isotopism classes. If there are two
numbers in a cell, the first one is the number of isomorphism classes and the
second one (in parentheses) is the number of isotopism classes.

All computations were done with the finite model builder Mace4 and with the GAP
package LOOPS on a Unix machine with a single $2$ GHz processor, with
computational times for individual orders ranging from seconds to hours.

\subsection{Comments on commutative A-loops of order $8$}

For classification up to isomorphism, see Section \ref{Sc:AppsIndex2}.

\begin{lemma}\label{Lm:GfItp}
Let $G$ be a commutative loop, $g\in \aut{G}$, and let $t_1$, $t_2$ be fixed
points of $g$. Define $f_i(x)=g(x)t_i$, for $i=1$, $2$. If there is $z\in G$
such that $g(z)=z^{-1}t_1^{-1}t_2$, then $G(f_1)$, $G(f_2)$ are isotopic.
\end{lemma}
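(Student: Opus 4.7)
The plan is to exhibit an explicit loop isotopism $(\alpha,\beta,\gamma)$ from $G(f_1)$ to $G(f_2)$. Since both loops share the same underlying set $G\cup\ov{G}$, I would look for the three components as bijections of this common set. Denote the multiplications in $G(f_1)$ and $G(f_2)$ by $*$ and $\circ$, respectively.

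Guided by the hypothesis $g(z)=z^{-1}t_1^{-1}t_2$, which ``twists'' $t_1$ into $t_2$ through $z$ and $g$, I would try
\begin{displaymath}
    \alpha(x)=zx,\quad \alpha(\ov{x})=\ov{x},\quad \beta(x)=x,\quad \beta(\ov{x})=\ov{z^{-1}x},\quad \gamma=\alpha
\end{displaymath}
for $x\in G$. Each is clearly a bijection of $G\cup\ov{G}$. It then remains to verify the isotopism identity $\alpha(u)\circ\beta(v)=\gamma(u*v)$ in the four cases determined by whether each of $u,v$ lies in $G$ or in $\ov{G}$.

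Three of the cases, namely $(G,G)$, $(G,\ov{G})$ and $(\ov{G},G)$, reduce to trivial cancellations using only the formulas \eqref{Eq:Gf} and the fact that $G\le\mnuc{G(f_i)}$; for instance, in the mixed case one has $\alpha(x)\circ\beta(\ov{y})=zx\circ\ov{z^{-1}y}=\ov{zxz^{-1}y}=\ov{xy}=\gamma(\ov{xy})=\gamma(x*\ov{y})$. These cases do not invoke the hypothesis on $z$ at all and would hold for any $z\in G$.

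The essential case is $(u,v)=(\ov{x},\ov{y})\in\ov{G}\times\ov{G}$: here
\begin{displaymath}
    \alpha(\ov{x})\circ\beta(\ov{y})=\ov{x}\circ\ov{z^{-1}y}=f_2(xz^{-1}y)=g(x)g(z)^{-1}g(y)t_2,
\end{displaymath}
while
\begin{displaymath}
    \gamma(\ov{x}*\ov{y})=\gamma(f_1(xy))=\gamma(g(x)g(y)t_1)=z\cdot g(x)g(y)t_1.
\end{displaymath}
Cancelling $g(x)g(y)$, the two sides agree exactly when $g(z)^{-1}t_2=zt_1$, which is precisely the given hypothesis $g(z)=z^{-1}t_1^{-1}t_2$. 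Thus the main (and essentially only) obstacle is arriving at the correct guess for $(\alpha,\beta,\gamma)$; once the maps are written down the verification is mechanical, and the hypothesis on $g(z)$ is exactly what makes the one nontrivial case work.
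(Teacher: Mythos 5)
Your proof is correct and is essentially the paper's own argument: the triple you write down is exactly the paper's isotopism $(\alpha,\beta,\gamma)$ with the roles of $\alpha$ and $\beta$ interchanged (immaterial by commutativity), and the verification, including the one case where the hypothesis $g(z)=z^{-1}t_1^{-1}t_2$ is used, matches the paper's computation.
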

\begin{proof}
Denote by $*$ the multiplication in $G(f_1)$ and by $\circ$ the multiplication
in $G(f_2)$. For $x\in G$, define $\alpha(x)=x$, $\alpha(\ov{x}) =
\ov{xz^{-1}}$, $\beta(x)=zx$, $\beta(\ov{x})=\ov{x}$, $\gamma(x)=zx$, and
$\gamma(\ov{x})=\ov{x}$. Then
\begin{align*}
    \alpha(x)\circ\beta(y) &= x\circ zy = xzy = \gamma(xy) = \gamma(x*y),\\
    \alpha(x)\circ\beta(\ov{y}) &= x\circ\ov{y} = \ov{xy} = \gamma(\ov{xy}) = \gamma(x*\ov{y}),\\
    \alpha(\ov{x})\circ\beta(y) &= \ov{xz^{-1}}\circ zy = \ov{xy} = \gamma(\ov{xy}) = \gamma(\ov{x}*y),\\
    \alpha(\ov{x})\circ\beta(\ov{y}) &= \ov{xz^{-1}}\circ \ov{y} = g(xz^{-1}y)t_2 = zg(xy)t_1
    = \gamma(g(xy)t_1) = \gamma(\ov{x}*\ov{y}),
\end{align*}
where we have used $g(z)=z^{-1}t_1^{-1}t_2$ in the last line.
\end{proof}

Let $G=\mathbb Z_2\times\mathbb Z_2 = \langle a\rangle \times \langle b\rangle$
be the Klein group. Consider the transposition $g=(a,b)$ with fixed points
$t_1=1$, $t_2=ab$. Let $f_i(x) = g(x)t_i$, for $i=1$, $2$. Then $b = g(a) =
a^{-1}t_1^{-1}t_2$, so $G(f_1)$, $G(f_2)$ are isotopic by Lemma \ref{Lm:GfItp}.

\subsection{Comments on commutative A-loops of order $15$ and $21$}

\begin{lemma}\label{Lm:pq}
Let $Q$ be a nonassociative commutative A-loop of order $p_0p_1$, where $p_0\ne
p_1$ are odd primes. Then there is $0\le i\le 1$ such that $Q$ contains a
normal subloop $S$ of order $p_i$, and all elements in $Q\setminus S$ have
order $p_{i+1}$, where the subscript is calculated modulo $2$.
\end{lemma}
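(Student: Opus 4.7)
The plan is to use solvability of commutative A-loops of odd order to extract a proper nontrivial normal subloop $S$, whose order must be prime by Lagrange, and then apply power-associativity and a short cardinality argument to pin down the orders of elements of $Q\setminus S$.

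First, since $|Q|=p_0p_1$ is odd, $Q$ is solvable, and since $Q$ is nonassociative it is not itself an abelian group, so any solvable series for $Q$ must contain a proper nontrivial term normal in $Q$, yielding a proper nontrivial normal subloop $S\unlhd Q$. By Lagrange, $|S|=p_i$ for some $i\in\{0,1\}$. Power-associativity and Lagrange then force $S$ to be cyclic of order $p_i$, and $Q/S$ is a commutative A-loop of prime order $p_{i+1}$, hence cyclic by Proposition~\ref{Pr:p2}.

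Next I would take an arbitrary $x\in Q\setminus S$ and narrow down $|x|$. Since $xS$ is a nontrivial element of the cyclic group $Q/S$ of order $p_{i+1}$, one has $x^{p_{i+1}}\in S$. Power-associativity and Lagrange give $|x|\in\{p_0,p_1,p_0p_1\}$, with $|x|=1$ ruled out by $x\ne 1$. The value $|x|=p_0p_1$ is impossible because then $Q=\langle x\rangle$ would be cyclic and therefore associative, contradicting the hypothesis. To exclude $|x|=p_i$, observe that $\gcd(p_i,p_{i+1})=1$ makes $x^{p_{i+1}}$ still of order $p_i$, so $\langle x^{p_{i+1}}\rangle$ is a subloop of $S$ of size $|S|$ and therefore equals $S$; since $\langle x^{p_{i+1}}\rangle\subseteq\langle x\rangle$ and $|\langle x\rangle|=p_i=|S|$, this forces $\langle x\rangle=S$, contradicting $x\notin S$. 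The only surviving possibility is $|x|=p_{i+1}$.

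The main obstacle is the exclusion of $|x|=p_i$: the crux is to notice that a power of $x$ lying inside $S$ and of order $p_i$ must already generate all of $S$, which then pulls the small cyclic subloop $\langle x\rangle$ entirely into $S$. Everything else is a direct application of Lagrange, power-associativity, and solvability of commutative A-loops of odd order, all quoted from \cite{JKV}, together with Proposition~\ref{Pr:p2}.
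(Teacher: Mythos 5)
Your proof is correct and follows essentially the same route as the paper: solvability of odd-order commutative A-loops gives a proper nontrivial normal subloop $S$, Lagrange forces $|S|=p_i$, and power-associativity rules out an element of order $p_0p_1$. You additionally spell out the exclusion of $|x|=p_i$ for $x\in Q\setminus S$ (via $x^{p_{i+1}}\in S$ generating $S$ and hence pulling $\langle x\rangle$ into $S$), a case the paper's own proof passes over in silence; this is a welcome clarification rather than a divergence.
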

\begin{proof}
We will use results of \cite{JKV} mentioned in the introduction without further
reference. Since $Q$ is of odd order, it is solvable. Since $Q$ is also
nonassociative, there is a normal subloop $S$ of $Q$ such that $1\ne S\ne Q$.
By the Lagrange Theorem, $|S|=p_i$ for some $0\le i\le 1$. Without loss of
generality, let $|S|=p_0$. Let $y\in Q\setminus S$ and let $T$ be the preimage
of the subloop $\langle yS\rangle$ of $Q/S$. By the Lagrange Theorem again,
$y^{p_1}=1$, as the only other alternative $|y|=p_0p_1$ would mean that $Q$ is
a group by power-associativity.
\end{proof}

The information afforded by Lemma \ref{Lm:pq} is sufficient to construct all
nonassociative commutative A-loops of order $15$ and $21$ by the finite model
builder Mace4. It turns out that in each case there is a unique such loop.
These two loops were constructed already by Dr\'apal \cite[Proposition
3.1]{Drapal}. Nevertheless the following problem remains open:

\begin{problem} Classify commutative A-loops of order $pq$, where $p<q$ are
odd primes.
\end{problem}

We have some reasons to believe that there is no nonassociative commutative
A-loop of order $35$.

\subsection{Comments on commutative A-loops of order $16$}\label{Ss:16}

Among the $12$ nonassociative commutative A-loops of order $16$ and exponent
$2$, three have inner mapping groups of orders that are not a power of $2$,
namely $12$, $56$ and $56$. We now construct the two nonassociative commutative
A-loops of order $16$ and exponent $2$ with inner mapping groups of order $56$,
and we show that they are isotopic.

Let $G=\mathbb Z_4\times\mathbb Z_2$. Define $g\in\aut{G}$ by $g(i,j) =
(i,i+j\mod 2)$. Note that $t_1=(0,0)$, $t_2=(2,1)$ are fixed points of $g$, and
let $f_i(x)=g(x)+t_i$. Then $G(f_1)$, $G(f_2)$ are the two announced loops, and
they are isotopic by Lemma \ref{Lm:GfItp}, since $g(1,0)=(1,1)$ and
$-(1,0)-(0,0)+(2,1)=(1,1)$.

\subsection{Comments on commutative A-loops of order $32$ and exponent $2$ with nontrivial
center}\label{Ss:32}

The methods developed in \cite{NV} in order to classify Moufang loops of order
$64$ can be adopted to other classes of loops. Using the cocycle formula of
Corollary \ref{Cr:CentralExtension} and the classification of commutative
A-loops of order $16$ from Subsection \ref{Ss:16}, we were able to classify all
commutative A-loops of order $32$ and of exponent $2$ with nontrivial center.

We now briefly describe the search, following the method of \cite{NV} closely.
For more details, see \cite{NV}.

Let $Q$ be a commutative A-loop of order $32$ and exponent $2$ with nontrivial
center. Then $Z(Q)$ is obviously an elementary abelian $2$-group, and hence it
possesses a $2$-element central subgroup $Z=(Z,+,0)$. Then $Q/Z=K$ is a
commutative A-loop of order $16$ and exponent $2$.

The loop cocycles $\theta:K\times K\to Z$ form a vector space $V$ over
$Z=\gf{2}$ with respect to addition $(\theta+\mu)(x,y) = \theta(x,y)+\mu(x,y)$.
The vector space $V$ has basis $\{\theta_{u,v};\;1\ne u\in K,\,1\ne v\in K\}$,
where
\begin{displaymath}
    \theta_{u,v}(x,y) = \left\{\begin{array}{ll}
        1,&\text{if $(u,v)=(x,y)$},\\
        0,&\text{otherwise.}
    \end{array}\right.
\end{displaymath}
The extension $K\ltimes_\theta Z$ will be a commutative A-loop of exponent $2$
if and only if $\theta$ belongs to the subspace $C = \{\theta\in V;\;\theta$
satisfies \eqref{Eq:Cocycle}, $\theta(x,x)=0$ for every $x\in K$ and
$\theta(x,y)=\theta(y,x)$ for every $x$, $y\in K\}$.

For every $x$, $y$, $z$, $x'\in K$, the equation \eqref{Eq:Cocycle} can be
viewed as a linear equation over $\gf{2}$ in variables $\theta_{u,v}$.
Similarly, for every $x$, $y\in K$ we obtain linear equations from the
condition $\theta(x,y)=\theta(y,x)$, and from $\theta(x,x)=0$.

Upon solving this system of linear equations, we obtain (a basis of) $C$, and
it is in principle possible to construct all extensions $K\ltimes_\theta Z$ for
$\theta\in C$. The two computational problems we face are: (i) the dimension of
$C$ can be large, (ii) it is costly to sort the resulting loops up to
isomorphism. In order to overcome these obstacles, we take advantage of
coboundaries and of an induced action of $\aut{K}$ on $C$.

Let $\tau:K\times Z$ be a mapping satisfying $\tau(1)=0$. Then
$\delta\tau:K\times K\to Z$ defined by
\begin{displaymath}
    \delta\tau(x,y) = \tau(xy)-\tau(x)-\tau(y)
\end{displaymath}
is a \emph{coboundary}. Coboundaries form a subspace $B$ of $V$.

In fact, $B$ is a subspace of $C$. This can be proved explicitly by verifying
that every coboundary $\theta = \delta\tau$ satisfies the identity
\eqref{Eq:Cocycle}, $\theta(x,y)=\theta(y,x)$ and $\theta(x,x)=0$. The
verification of \eqref{Eq:Cocycle} is a bit unpleasant, so it is worth
realizing that every coboundary $\theta$ satisfies the group cocycle identity
\begin{displaymath}
    \theta(x,y)+\theta(xy,z) = \theta(y,x)+\theta(x,yz),
\end{displaymath}
and hence also any cocycle identity that follows from associativity, in
particular \eqref{Eq:Cocycle}.

Moreover, if $\theta$, $\mu:K\times K\to Z$ are two cocycles such that
$\theta-\mu$ is a coboundary, then $K\ltimes_\theta Z$ is isomorphic to
$K\ltimes_\theta Z$, cf. \cite[Lemma 9]{NV}. It therefore suffices to construct
loops $K\ltimes_\theta Z$, where $\theta\in D$, $C=B\oplus D$.

Given $\theta\in V$ and $\varphi\in\aut{K}$, we define $\theta_\varphi\in V$ by
\begin{displaymath}
    \theta_\varphi(x,y) = \theta(\varphi(x),\varphi(y)).
\end{displaymath}
This action of $\aut{K}$ on $V$ induces an action on $D$. Moreover, by
\cite[Lemma 14]{NV}, $K\ltimes_\theta Z$ is isomorphic to
$K\ltimes_{\theta_\varphi}Z$. It therefore suffices to construct loops
$K\ltimes_\theta Z$, where we take one $\theta$ from each orbit of $\aut{K}$ on
$D$.

Using each of the $13$ commutative A-loops of order $16$ and exponent $2$ as
$K$ (the elementary abelian group of order $16$ must also be taken into
account), the above search finds $355$ commutative A-loops of order $32$ and
exponent $2$ within several minutes. The final isomorphism search takes several
hours with LOOPS.

The lone isotopism $\mathbb Z_2\times Q_1 \to \mathbb Z_2\times Q_2$ is induced
by the isotopism $Q_1\to Q_2$ described in Subsection \ref{Ss:16}.

\section{Acknowledgement}

We thank the anonymous referee for the nice proof of Lemma \ref{Lm:NewForms}.

\bibliographystyle{plain}

\end{document}